\numberwithin{equation}{section}
\newtheorem{Theorem}{Theorem}[section]
\newtheorem{Lemma}{Lemma}[section]
\theoremstyle{definition}
\newtheorem{Definition}{Definition}[section]
\theoremstyle{remark}
\newtheorem{Remark}{Remark}[section]
\newtheorem{Example}{Example}[section]
\newcommand{\esssup}{\mathop{\rm ess \, sup}\limits}
\newcommand{\mes}{\mathop{\rm mes}\nolimits}
\author{Andrej A. Kon'kov}
\address{Department of Differential Equations,
Faculty of Mechanics and Mathematics,
Mo\-s\-cow Lo\-mo\-no\-sov State University,
Vorobyovy Gory,
119992 Moscow, Russia}
\email{konkov@mech.math.msu.su}
\title[]{On the behavior of Kneser solutions of nonlinear ordinary differential equations}
\thanks{The research was supported by RFBR, grant 11-01-12018-ofi-m-2011.}
\keywords{Nonlinear ordinary differential equations;  Kneser solutions; 
Singular solutions of the first kind}
\subjclass{34A34, 34A40, 34C11, 34C41}
\date{}
\begin{document}

\begin{abstract}
We obtain priory estimates and sufficient conditions for Kneser solutions of ordinary differential
equations to vanish in a neighborhood of infinity.
\end{abstract}

\maketitle

\section{Introduction}

We study solutions of the differential equations
\begin{equation}
	w^{(m)}
	=
	Q (r, w, \ldots, w^{(m-1)}),
	\quad
	r \ge a,
	\label{1.1}
\end{equation}
of order $m \ge 2$ satisfying the conditions
\begin{equation}
	(-1)^i w^{(i)} (r)
	\ge
	0,
	\quad
	r \ge a,
	\quad
	i = 0, \ldots, m - 1,
	\label{1.2}
\end{equation}
where $Q$ belongs to the Caratheodory class 
$
    K_{loc}
    \left(
        [a,\infty) \times {\mathbb R}^m
    \right),
$
$a > 0$~\cite{KCbook}. 
Throughout the paper, it is assumed that 
\begin{equation}
	(-1)^m
	Q (r, t_0, \ldots, t_{m-1})
	\ge
	q (r)
	h (t_0)
	-
	\sum_{i = 1}^{m - 1}
	b_i (r) |t_i|
	\label{1.3}
\end{equation}
on the set 
$
	\{
		(r,t_0,\ldots,t_{m-1})
		:
		r \ge a,
		\;
		t_0 > 0,
		\;
		(-1)^i t_i \ge 0,
		\:
		i = 1, \ldots, m-1
	\},
$
where
$q : [a,\infty) \to [0,\infty)$
and
$b_i : [a,\infty) \to [0,\infty)$,
$i = 1, \ldots, m - 1$,
belong to the space
$L_{\infty, loc} ([a,\infty))$
and
$h : (0,\infty) \to (0,\infty)$
is a continuous function.

As an example of~\eqref{1.1}, one can take the equation
$$
	w^{(m)} 
	+ 
	z_{m - 1} (r) w^{(m - 1)} 
	+ 
	\ldots 
	+ 
	z_1 (r) w' 
	= 
	z (r, w)
$$
in which the coefficients of lower derivatives and the right-hand side are continuous functions with
${(-1)^m z (r, t)} \ge {q (r) h (t)}$ for all $r \ge a$ and $t > 0$.
Inequality~\eqref{1.3} should obviously be fulfilled, if we put
$$
	Q (r, t_0, \ldots, t_{m - 1})
	=
	z (r, t_0)
	-
	\sum_{i = 1}^{m - 1}
	z_i (r)
	t_i
$$
and
$b_i = |z_i|$, $i = 1, \ldots, m - 1$.

As is customary, a function $w : [a, \infty) \to {\mathbb R}$
is called a solution of~\eqref{1.1}, \eqref{1.2}
if its derivatives $w^{(i)}$, $i = 0, \ldots, m - 1$,
are locally absolutely continuous on the interval $[a, \infty)$,
equation~\eqref{1.1} holds for almost all $r \in [a, \infty)$,
and conditions~\eqref{1.2} hold for all $r \in [a, \infty)$.

By a non-trivial solution of~\eqref{1.1}, \eqref{1.2} we mean a solution
that does not vanish on the whole interval $[a, \infty)$.

\begin{Definition}[\cite{KCbook}]
A non-trivial solution of~\eqref{1.1}, \eqref{1.2} is singular of the first kind
if it vanishes in a neighborhood of infinity;
otherwise this solution is called regular (proper).
\end{Definition}

In the literature, solutions of problem~\eqref{1.1}, \eqref{1.2} are also known as Kneser solutions.
Starting from the pioneering paper of A.~Kneser~\cite{Kneser},
they attract the attention of many mathematicians~[1--10].
Our aim is to obtain priory estimates and sufficient conditions 
for any solution of~\eqref{1.1}, \eqref{1.2} to be singular of the first kind.
In particular, we generalize results of~\cite{meJMS}, 
where the case of $b_1 = \ldots = b_{m-1} = 0$ was considered.

\section{Main Results}

We denote
$$
	g (t)
	=
	\inf_{
		(t / \theta, \theta t)
	}
	h,
	\quad
	t \in (0, \infty),
$$
$$
	f (r)
	=
	\frac{
		q (r)
	}{
		1
		+
		\sum\limits_{i = 1}^{m - 1}
		r^{m - i}
		\esssup_{
			(r / \sigma, r \sigma)
			\cap
			[a, \infty)
		}
		b_i
	},
	\quad
	r \in [a, \infty)
$$
and
$$
	\mu (r)
	=
	1
	+
	r^m
	\esssup_{
		(r / \sigma, r)
		\cap
		[a, \infty)
	}
	f,
	\quad
	r \in [a, \infty),
$$
where $\theta > 1$ and $\sigma > 1$ are some real numbers which can be chosen arbitrary.

\begin{Theorem}\label{T2.1}
Let
\begin{equation}
	\int_0^1
	g^{- 1 / m} (t)
	t^{1 / m - 1}
	\,
	dt
	<
	\infty
	\label{T2.1.1}
\end{equation}
and
\begin{equation}
	\int_a^\infty
	\xi^{m - 1}
	f (\xi)
	\mu^{1 / m - 1} (\xi)
	\,
	d \xi
	=
	\infty.
	\label{T2.1.2}
\end{equation}
Then any non-trivial solution of~\eqref{1.1}, \eqref{1.2} is singular of the first kind.
\end{Theorem}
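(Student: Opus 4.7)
The plan is to argue by contradiction: assume there exists a non-trivial solution $w$ of \eqref{1.1}, \eqref{1.2} that is not singular of the first kind. Because of \eqref{1.2} with $i = 0, 1$, the function $w$ is non-negative and non-increasing, and hence ``does not vanish in a neighborhood of infinity'' is equivalent to $w(r) > 0$ for every $r \ge a$. The goal is to produce an a priori upper bound on $w$ whose failure is forced by the divergence \eqref{T2.1.2}.

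The first step is to establish Kneser-type pointwise bounds on the lower derivatives in terms of $w$ itself. Since $(-1)^i w^{(i)}$ has a definite sign and $(-1)^{i+1} w^{(i+1)}$ has the opposite sign, $|w^{(i)}|$ is monotone; an inductive integration between $r/\sigma$ and $r$ (carried out $i$ times) yields
$$
	r^i |w^{(i)}(r)|
	\le
	c_{m,\sigma} \, w(r / \sigma^i),
	\qquad
	i = 1, \ldots, m-1,
$$
for all sufficiently large $r$. These estimates are a purely algebraic consequence of \eqref{1.2} and do not use the equation.

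The second step is to substitute these bounds into \eqref{1.3} and absorb the $b_i$ corrections into $q h(w)$. Using that $\sum_{i=1}^{m-1} b_i(r) |w^{(i)}(r)|$ is controlled by a constant times $w(r/\sigma^{m-1}) \sum_{i=1}^{m-1} r^{-i} \esssup_{(r/\sigma, r\sigma)\cap [a,\infty)} b_i$, the defining identity
$$
	q(r)
	=
	f(r)
	\Bigl(
		1
		+
		\sum_{i=1}^{m-1}
		r^{m-i}
		\esssup_{(r/\sigma, r\sigma) \cap [a,\infty)}
		b_i
	\Bigr),
$$
and the fact that $g(w(r)) \le h(w(s))$ whenever $w(s) \in (w(r)/\theta, \theta w(r))$, one reduces the raw differential inequality to the clean form
$$
	(-1)^m w^{(m)}(r)
	\ge
	C \, f(r) \, g(w(r))
$$
on the set where $w$ varies slowly enough. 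This is precisely the situation considered in \cite{meJMS}, where the case $b_1 = \ldots = b_{m-1} = 0$ is treated.

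It remains to invoke the integration argument of \cite{meJMS}. Integrating the clean inequality $m$ times starting from infinity (all lower derivatives $w^{(i)}$, $i = 1, \ldots, m-1$, tend to zero by the Kneser sign conditions) and combining with the estimates of Step 1, one obtains an a priori estimate of the schematic form
$$
	\int_{0}^{w(r)}
	g^{-1/m}(t) \, t^{1/m - 1} \, dt
	\ge
	c
	\int_{r}^{\infty}
	\xi^{m-1}
	f(\xi)
	\mu^{1/m - 1}(\xi)
	\, d\xi,
$$
valid as long as $w(r) > 0$. The left-hand side is bounded by \eqref{T2.1.1} and the monotonicity of $w$, while the right-hand side is infinite by \eqref{T2.1.2}, which is the desired contradiction. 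The main technical difficulty will be the uniform absorption in Step 2: the dilation comparison between $w(r)$ and $w(r/\sigma^{m-1})$ must be reconciled with the passage from $h$ to $g$ in such a way that the constants depend only on $\sigma$, $\theta$, and $m$; correctly calibrating $\theta$ relative to $\sigma$ is where the interplay between the parameters introduced in the definitions of $g$, $f$, and $\mu$ becomes essential.
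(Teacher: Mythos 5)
Your overall architecture (argue by contradiction, derive the Kneser-type bounds $r^i|w^{(i)}(r)|\le c\,w(r/\sigma^i)$ from the sign conditions~\eqref{1.2}, reduce to a clean inequality involving $f$ and $g$, then integrate to an a priori estimate) is in the right spirit, and Steps 1 and 3 are sound in outline. The gap is Step 2. Write $q(r)=f(r)\bigl(1+\sum_j r^{m-j}B_j(r)\bigr)$, where $B_j(r)$ denotes the essential supremum of $b_j$ over $(r/\sigma,r\sigma)\cap[a,\infty)$. To absorb the term $b_j(r)|w^{(j)}(r)|\le C\,B_j(r)\,r^{-j}w(r/\sigma^j)$ into the matching piece $f(r)\,r^{m-j}B_j(r)\,h(w(r))$ of $q(r)h(w(r))$, you would need the pointwise inequality $r^m f(r)\,h(w(r))\ge C\,w(r/\sigma^j)$. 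Nothing in the hypotheses supplies this: $r^m f(r)$ may be arbitrarily small (even zero on sets of positive measure) while $w$ stays bounded away from $0$ and $h(w(r))$ stays bounded, so the right-hand side of~\eqref{1.3} can genuinely be negative on large sets and no pointwise bound $(-1)^m w^{(m)}\ge C f\,g(w)$ holds. The caveat ``on the set where $w$ varies slowly enough'' does not repair this, because the obstruction is the ratio of $r^m f(r)h(w(r))$ to $w$, not the oscillation of $w$.

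The paper circumvents exactly this difficulty in Lemma~\ref{L3.1} by a measure decomposition rather than an absorption: by~\eqref{1.1}--\eqref{1.3}, for a.e.\ $\xi$ at least one of the $m$ quantities $(-1)^i w^{(i)}(\xi)b_i(\xi)$ ($i<m$) or $(-1)^m w^{(m)}(\xi)$ exceeds $\frac1m q(\xi)h(w)$; each alternative is inserted into the Taylor identity~\eqref{PL3.1.1} at the corresponding order $i$, yielding a lower bound on $w(r_1)-w(r_2)$, and a separate dichotomy (the sets $\omega_i$ and inequality~\eqref{PL3.1.6}) controls the portion of $[r_1,r_2]$ where $(-1)^m w^{(m)}<0$. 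You would need to supply something equivalent to make Step~2 work. A second, smaller omission: your target estimate with the weight $\mu^{1/m-1}$ hides why the weight is built from $1+\varphi$ rather than $\varphi$; in the paper this ``$1+$'' is precisely what allows the intervals on which $w$ does not drop by a fixed factor to be treated by the unweighted inequality~\eqref{PT2.1.2} together with Lemma~\ref{L3.4}, and that case distinction (the sets $\Xi_1$ versus $\Xi_2$) has no counterpart in your outline.
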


\begin{Theorem}\label{T2.2}
Suppose that condition~\eqref{T2.1.1} is valid,
\begin{equation}
	\int_a^\infty
	\xi^{m - 1}
	f (\xi)
	\,
	d \xi
	=
	\infty
	\label{T2.2.1}
\end{equation}
and, moreover,
\begin{equation}
	\limsup_{r \to \infty}
	\frac{
		r^m f (r)
	}{
		\int_a^r
		\xi^{m - 1}
		f (\xi)
		\,
		d \xi
	}
	<
	\infty.
	\label{T2.2.2}
\end{equation}
Then any non-trivial solution of~\eqref{1.1}, \eqref{1.2} is singular of the first kind.
\end{Theorem}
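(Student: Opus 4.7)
The plan is to reduce Theorem~\ref{T2.2} to Theorem~\ref{T2.1} by verifying that the new hypotheses (\ref{T2.2.1}) and (\ref{T2.2.2}) together imply condition (\ref{T2.1.2}). Since (\ref{T2.1.1}) is assumed in both theorems, such a reduction completes the proof.

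Let $F(r) = \int_a^r \xi^{m-1} f(\xi)\, d\xi$, so that $F(r) \to \infty$ by (\ref{T2.2.1}). By (\ref{T2.2.2}) there exist $C > 0$ and $r_0 \ge a$ with $r^m f(r) \le C F(r)$ for almost every $r \ge r_0$. For $r \ge \sigma r_0$ and a.e.\ $\rho \in (r/\sigma, r)$, monotonicity of $F$ together with $\rho > r/\sigma$ yields
$$
f(\rho) \le \frac{C F(\rho)}{\rho^m} \le \frac{C \sigma^m F(r)}{r^m}.
$$
Passing to the essential supremum in $\rho$ gives $\mu(r) \le 1 + C \sigma^m F(r)$, and since $F(r) \to \infty$ we deduce $\mu(r) \le C' F(r)$ for all sufficiently large $r$, for some constant $C' > 0$.

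Because $1/m - 1 < 0$, this upper bound on $\mu$ translates into the lower bound $\mu^{1/m - 1}(r) \ge c\, F^{1/m - 1}(r)$ for some $c > 0$ and all large $r$. Since $\xi^{m - 1} f(\xi) = F'(\xi)$ almost everywhere, a direct computation yields
$$
\int_{r_1}^\infty \xi^{m-1} f(\xi)\, \mu^{1/m - 1}(\xi)\, d\xi
\ge
c \int_{r_1}^\infty F^{1/m - 1}(\xi)\, F'(\xi)\, d\xi
= c\, m \lim_{R \to \infty} \bigl( F^{1/m}(R) - F^{1/m}(r_1) \bigr) = \infty,
$$
so (\ref{T2.1.2}) is verified and Theorem~\ref{T2.1} delivers the conclusion.

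The only delicate point is the passage from the essentially pointwise bound provided by (\ref{T2.2.2}) to a uniform control of $f$ on the sliding window $(r/\sigma, r)$ entering the definition of $\mu$; once this doubling-type step is in place, (\ref{T2.1.2}) follows from the elementary substitution $F' = \xi^{m-1} f$.
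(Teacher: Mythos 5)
Your proof is correct, but it takes a genuinely different route from the paper's. You reduce Theorem~\ref{T2.2} to Theorem~\ref{T2.1} by checking that \eqref{T2.2.1} and \eqref{T2.2.2} together imply \eqref{T2.1.2}; writing $F(r)=\int_a^r\xi^{m-1}f(\xi)\,d\xi$, your doubling step $\mu(r)\le 1+C\sigma^m F(r)$ is sound (monotonicity of $F$ plus $\rho>r/\sigma$ on the window $(r/\sigma,r)$), and since $1/m-1<0$ the substitution $\int F^{1/m-1}F'\,d\xi=m\,F^{1/m}+\mathrm{const}$ forces the integral in \eqref{T2.1.2} to diverge because $F\to\infty$. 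The paper instead proves the quantitative Lemma~\ref{L3.7} --- the estimate \eqref{L3.7.1} --- by rerunning the sequence construction $\{r_i\}$ from the proof of Theorem~\ref{T2.1} together with the doubling Lemma~\ref{L3.5}, and then deduces Theorem~\ref{T2.2} by letting $r\to\infty$ in \eqref{L3.7.1} and contradicting \eqref{T2.1.1}. Your reduction is shorter and bypasses Lemmas~\ref{L3.5} and~\ref{L3.7} entirely; what the paper's longer route buys is the explicit decay estimate \eqref{L3.7.1}, which is reused verbatim to prove Theorem~\ref{T2.3} and which your argument does not produce. The only point worth making explicit is the reading of the $\limsup$ in \eqref{T2.2.2} for a merely measurable $f$ (a bound $r^m f(r)\le C F(r)$ for a.e.\ large $r$), but the paper relies on exactly the same interpretation when it derives \eqref{PL3.7.1}, so this is not a gap relative to the author's own argument.
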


Theorems~\ref{T2.1} and~\ref{T2.2} are proved in Section~\ref{proofOfTheorems}.
Now, we demonstrate their exactness.

\begin{Example}\label{E2.1}
Consider the problem
\begin{align}
	&
	w'' + b (r) w' = p (r) w^\lambda,
	\quad
	r \ge a,
	\label{E2.1.1}
	\\
	&
	(- 1)^i w^{(i)} (r) \ge 0,
	\quad
	r \ge a,
	\quad
	i = 0, 1,
	\label{E2.1.2}
\end{align}
where $\lambda < 1$ and, moreover,
$b : [a, \infty) \to {\mathbb R}$ 
and
$p : [a, \infty) \to [0, \infty)$
are locally bounded measurable functions such that
\begin{equation}
	|b (r)| \le B r^s,
	\;
	B = const > 0,
	\label{E2.1.3}
\end{equation}
for all sufficiently large $r$ and
\begin{equation}
	p (r) \sim r^l
	\quad
	\mbox{as } r \to \infty,
	\label{E2.1.4}
\end{equation}
i.e. 
$$
	c_1 r^l \le p (r) \le c_2 r^l
$$
with some constants $c_1 > 0$ and $ c_2 > 0$ for almost all $r$ in a neighborhood of infinity.

At first, let $s > -1$.
By Theorem~\ref{T2.2}, if
\begin{equation}
	l \ge s - 1,
	\label{E2.1.5}
\end{equation}
then any non-trivial solution of~\eqref{E2.1.1}, \eqref{E2.1.2} is singular of the first kind.
At the same time, in case of $l < s - 1$, 
it does not present any particular problem to verify that
$$
	w (r)
	=
	r^{(l - s + 1) / (1 - \lambda)}
$$
is a regular solution of~\eqref{E2.1.1}, \eqref{E2.1.2},
where $b (r) = - r^s$ and $p$ is a non-negative continuous function
satisfying relation~\eqref{E2.1.4}.
Therefore, condition~\eqref{E2.1.5} is exact.

Now, assume that $s \le - 1$. If
\begin{equation}
	l \ge - 2,
	\label{E2.1.6}
\end{equation}
then Theorem~\ref{T2.2} implies that 
any non-trivial solution of~\eqref{E2.1.1}, \eqref{E2.1.2} is singular of the first kind.
This condition is exact too. Namely, if $l < - 2$, then, putting
$$
	w (r)
	=
	r^{(l + 2) / (1 - \lambda)},
$$
we obviously obtain a regular solution of~\eqref{E2.1.1}, \eqref{E2.1.2}, 
where $b \equiv 0$ and $p$ is a non-negative continuous function 
for which~\eqref{E2.1.4} holds.
\end{Example}

\begin{Example}\label{E2.2}
In equation~\eqref{E2.1.1}, let
$b : [a, \infty) \to {\mathbb R}$ 
and
$p : [a, \infty) \to [0, \infty)$
are locally bounded measurable functions such that~\eqref{E2.1.3} is valid with $s > - 1$ 
and, moreover,
\begin{equation}
	p (r)
	\sim
	r^{s - 1}
	\log^\nu r
	\quad
	\mbox{as }
	r \to \infty.
	\label{E2.2.1}
\end{equation}
In the case of $\nu = 0$, the last relation obviously takes the form~\eqref{E2.1.4} 
with the critical exponent $l = s - 1$.
Also assume that $\lambda < 1$.

According to Theorem~\ref{T2.2}, if
\begin{equation}
	\nu \ge - 1,
	\label{E2.2.2}
\end{equation}
then any non-trivial solution of~\eqref{E2.1.1}, \eqref{E2.1.2} is singular of the first kind.
In so doing, if $\nu < - 1$, then there exists a real number $\varepsilon > 0$ such that
$$
	w (r)
	=
	\log^{(\nu + 1) / (1 - \lambda)}
	(r / \varepsilon)
$$
is a regular solution of~\eqref{E2.1.1}, \eqref{E2.1.2},
where
$b (r) = - r^s$
and
$p  \in C ([a, \infty))$
is a non-negative function satisfying relation~\eqref{E2.2.1}.
This demonstrates the exactness of~\eqref{E2.2.2}.

Now, let~\eqref{E2.1.3} is valid with $s \le -1$.
We examine the critical exponent $l = -2$ in formula~\eqref{E2.1.4}.
Assume that
\begin{equation}
	p (r)
	\sim
	r^{- 2}
	\log^\gamma
	r
	\quad
	\mbox{as }
	r \to \infty.
	\label{E2.2.3}
\end{equation}
By Theorem~\ref{T2.2}, if
$$
	\gamma \ge - 1,
$$
then any non-trivial solution of~\eqref{E2.1.1}, \eqref{E2.1.2} is singular of the first kind.
The above condition is exact.
Really, in the case of $\gamma < -1$, it can be verified that
$$
	w (r)
	=
	\log^{(\gamma + 1) / (1 - \lambda)} 
	(r / \varepsilon)
$$
is a regular solution of~\eqref{E2.1.1}, \eqref{E2.1.2} for some sufficiently small $\varepsilon > 0$,
where $b \equiv 0$ and $p  \in C ([a, \infty))$
is a non-negative function for which~\eqref{E2.2.3} holds.
\end{Example}

\begin{Example}\label{E2.3}
Consider the equation
\begin{equation}
	w'' + b (r) w' = p (r) w \ln^\lambda \left( 1 + \frac{1}{w}  \right),
	\quad
	r \ge a,
	\label{E2.3.1}
\end{equation}
where $\lambda > 2$ and, moreover,
$b : [a, \infty) \to {\mathbb R}$ 
and
$p : [a, \infty) \to [0, \infty)$
are locally bounded measurable functions such that~\eqref{E2.1.3} and~\eqref{E2.1.4} are fulfilled.

If $s > - 1$, then in accordance with Theorem~\ref{T2.2} inequality~\eqref{E2.1.5} 
guarantees that any non-trivial solution of~\eqref{E2.3.1}, \eqref{E2.1.2} 
is singular of the first kind.
Assume that~\eqref{E2.1.5} does not hold or, in other words, $l < s - 1$.
Then, putting
$$
	w (r)
	=
	1 + r^{l - s + 1},
$$
we obtain a regular solution of~\eqref{E2.3.1}, \eqref{E2.1.2}, where $b (r) = - r^s$
and $p$ is a non-negative continuous function satisfying relation~\eqref{E2.1.4}.

For $s \le - 1$, by Theorem~\ref{T2.2}, any non-trivial solution of~\eqref{E2.3.1}, \eqref{E2.1.2} 
is singular of the first kind if inequality~\eqref{E2.1.6} is valid.
In turn, if~\eqref{E2.1.6} is not valid, then 
$$
	w (r)
	=
	1 + r^{l + 2}
$$
is a regular solution of~\eqref{E2.3.1}, \eqref{E2.1.2},
where $b \equiv 0$ and $p$ is a non-negative continuous function 
for which~\eqref{E2.1.4} holds.
\end{Example}

\begin{Theorem}\label{T2.3}
In the hypotheses of Theorem~$\ref{T2.2}$, let the condition
\begin{equation}
	\int_0^1
	g^{- 1 / m} (t)
	t^{1 / m - 1}
	\,
	dt
	=
	\infty
	\label{T2.3.1}
\end{equation}
be fulfilled instead of~\eqref{T2.1.1}. 
Then any solution of~\eqref{1.1}, \eqref{1.2} satisfies the estimate
\begin{equation}
	w (r)
	\le
	G_0^{-1}
	\left(
		C
		\left(
			\int_a^r
			\xi^{m - 1}
			f (\xi)
			\,
			d\xi
		\right)^{1 / m}
	\right)
	\label{T2.3.2}
\end{equation}
for all sufficiently large $r$,
where $G_0^{-1}$ is the function inverse to
$$
	G_0 (\xi)
	=
	\int_\xi^1
	g^{- 1 / m} (t)
	t^{1 / m - 1}
	\,
	dt,
$$
and the constant $C > 0$ depends only on $m$, $\theta$, $\sigma$,
and on the value of the limit in the left-hand side of~\eqref{T2.2.2}.
\end{Theorem}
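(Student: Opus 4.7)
The plan is to adapt the proof of Theorem~\ref{T2.2} so that, in the absence of~\eqref{T2.1.1}, the same scheme yields a quantitative estimate rather than a contradiction. In Theorem~\ref{T2.2}, condition~\eqref{T2.1.1} makes the function $G_0$ bounded near $0$, which is precisely what forces a contradiction; without it, $G_0$ diverges, and the corresponding inequality on $G_0(w(r))$ furnishes the bound~\eqref{T2.3.2}.

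\emph{Step 1 (Kolmogorov--Landau bounds).} From~\eqref{1.2} the derivatives $w^{(i)}$ ($i=0,\ldots,m-1$) have alternating signs, are monotone, and (for $i\le m-2$) vanish at infinity. A standard repeated-integration argument then yields, for every $\sigma>1$,
$$
    r^i\,|w^{(i)}(r)|\le C(m,\sigma)\,w(r/\sigma),\qquad i=1,\ldots,m-1,\ r\ge a\sigma.
$$

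\emph{Step 2 (Absorbing lower-order terms into $f$).} Combining Step~1 with~\eqref{1.1} and~\eqref{1.3}, taking essential suprema of the $b_i$ over $(r/\sigma,r\sigma)$, and invoking the definition of $f$, one obtains
$$
    (-1)^m w^{(m)}(r)\ge f(r)\bigl[h(w(r))-C\,w(r/\sigma)/r^m\bigr]
$$
for almost every large $r$. The Kneser conditions together with~\eqref{T2.2.1} force $w(r)\to 0$, so the continuity and positivity of $h$ make the bracket positive for large~$r$. A dyadic comparison of $h$ with $g$ via the $\theta$-regularization then gives
$$
    (-1)^m w^{(m)}(r)\ge c\,f(r)\,g(w(r))
$$
for almost every sufficiently large $r$, with $c>0$ depending only on $m,\theta,\sigma$.

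\emph{Step 3 ($m$-fold iterated integration and localization).} Integrating this $m$ times on $(r,\infty)$, using the vanishing of $w^{(i)}(\infty)$ for $i\le m-2$, yields the Volterra inequality
$$
    w(r)\ge \frac{c}{(m-1)!}\int_r^\infty(t-r)^{m-1}f(t)\,g(w(t))\,dt.
$$
Localizing the integral to $(r,\sigma r)$ via monotonicity of $w$ (and hence of $g\circ w$), and invoking~\eqref{T2.2.2} to replace $r^m f(r)$ by a constant multiple of $\int_a^r\xi^{m-1}f(\xi)\,d\xi$, converts this into a pointwise inequality relating $w(r)$, $f(r)$, $g(w(r))$, and the integral $\int_a^r\xi^{m-1}f(\xi)\,d\xi$.

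\emph{Step 4 (Introducing $G_0$ and inverting).} Rewriting the pointwise inequality of Step~3 via the identity
$$
    -\frac{d}{dr}G_0(w(r))=g^{-1/m}(w(r))\,w(r)^{1/m-1}\,(-w'(r))
$$
and combining with the derivative bound $|w'(r)|\le C\,w(r/\sigma)/r$ from Step~1, one derives, after integration on $[r_0,r]$ with $r_0$ chosen so that $w(r_0)$ is sufficiently small,
$$
    \bigl[G_0(w(r))\bigr]^m\ge c'\int_a^r\xi^{m-1}f(\xi)\,d\xi.
$$
Inverting the decreasing bijection $G_0$ gives~\eqref{T2.3.2}.

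The central difficulty lies in Step~3: obtaining the exponent $1/m$ in the final estimate requires performing the $m$-fold integration and the subsequent localization with precisely the right weights, so that what emerges has the homogeneity of an $m$-th power in $G_0(w)$ versus a first power in $\int\xi^{m-1}f$. Hypothesis~\eqref{T2.2.2} is indispensable here: without it, the tail of the Volterra integral cannot be dominated by its local part in a manner compatible with the $G_0$-substitution, and the regularizations built into $g$, $f$, $\mu$ are exactly what make the bookkeeping with essential suprema close under these manipulations.
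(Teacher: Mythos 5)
The decisive gap is in your Step~2. From \eqref{1.3} and your Step~1 bound $|w^{(i)}(r)|\le C\,w(r/\sigma)/r^{i}$ one can only conclude
$$
	(-1)^m w^{(m)}(r)
	\ge
	f(r)\,D(r)\,h(w(r))
	-
	C\,\frac{w(r/\sigma)}{r^{m}}\,D(r),
	\qquad
	D(r)=1+\sum_{i=1}^{m-1}r^{m-i}\esssup_{(r/\sigma,r\sigma)}b_i,
$$
so the error term carries the (possibly huge) factor $D(r)$ rather than the factor $f(r)$ you wrote; trading $D$ for $f$ would require $q\le C f^{2}$, which is not assumed. Worse, even the corrected inequality cannot yield $(-1)^m w^{(m)}\ge c\,f\,g(w)$ pointwise in the critical regime: for $w''-r^{s}w'=p(r)w^{\lambda}$ with $w=Ar^{-\beta}$ (the setting of Examples~\ref{E2.1} and~\ref{E2.4}) the source $q\,h(w)$ and the damping $r^{s}|w'|$ have exactly the same order and $w''$ is their small difference, so replacing $|w'(r)|$ by the universal majorant $\tfrac{\sigma}{\sigma-1}w(r/\sigma)/r$ (whose constant cannot see $\beta$, while the true ratio $r|w'(r)|/w(r/\sigma)=\beta\sigma^{-\beta}$ is tiny) makes your right-hand side negative for large $\beta$. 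Finally, the claim that the bracket is eventually positive ``by continuity and positivity of $h$'' is unfounded: $h$ is only assumed positive on $(0,\infty)$, and $h(w(r))$ may decay arbitrarily fast as $w(r)\to0$.

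The paper never derives such a pointwise differential inequality; this is precisely what its Lemma~\ref{L3.1} is designed to avoid. There the interval $[r_1,r_2]$ is split into sets $\Omega_i$ on which $(-1)^iw^{(i)}b_i$ dominates $q\,h(w)/m$, and on each such set the signed quantity $\int(\xi-r_1)^{i-1}(-1)^iw^{(i)}\,d\xi$ is bounded \emph{above} by $w(r_1)-w(r_2)$ through integration by parts and \eqref{1.2} --- i.e.\ the lower-order terms are controlled in an integrated, sign-exploiting sense, preserving exactly the cancellation your pointwise majorant destroys. The exponent $1/m$ in \eqref{T2.3.2} is then produced not by your Steps~3--4 (which you yourself flag as unresolved) but by the reverse-power inequality of Lemma~\ref{L3.2}, the comparison of Lemma~\ref{L3.4}, and the dichotomy between the index sets $\Xi_1$ and $\Xi_2$ in Lemma~\ref{L3.7}; hypothesis \eqref{T2.2.2} enters only through Lemma~\ref{L3.5} and the bound $\varphi(\xi)\le c\int_a^{\xi}\zeta^{m-1}f(\zeta)\,d\zeta$. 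As written, your argument does not prove the theorem, and its first two steps cannot be repaired without changing the method.
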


\begin{Theorem}\label{T2.4}
Let~\eqref{T2.1.1} be valid,
$$
	\int_a^\infty
	\xi^{m - 1}
	f (\xi)
	\,
	d\xi
	<
	\infty
$$
and, moreover,
\begin{equation}
	\limsup_{r \to \infty}
	\frac{
		r^m f (r)
	}{
		\int_r^\infty
		\xi^{m - 1}
		f (\xi)
		\,
		d\xi
	}
	<
	\infty.
	\label{T2.4.1}
\end{equation}
Then any regular solution of~\eqref{1.1}, \eqref{1.2} satisfies the estimate
\begin{equation}
	w (r)
	\ge
	G_\infty^{-1}
	\left(
		C
		\left(
			\int_r^\infty
			\xi^{m - 1}
			f (\xi)
			\,
			d\xi
		\right)^{1 / m}
	\right)
	\label{T2.4.2}
\end{equation}
for all sufficiently large $r$,
where $G_\infty^{-1}$ is the function inverse to
$$
	G_\infty (\xi)
	=
	\int_0^\xi
	g^{- 1/ m} (t)
	t^{1/ m - 1}
	\,
	dt,
$$
and the constant $C > 0$ depends only on $m$, $\theta$, $\sigma$, 
and on the value of the limit in the left-hand side of~\eqref{T2.4.1}.
\end{Theorem}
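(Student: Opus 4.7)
The plan is to mirror the strategy used for Theorem~\ref{T2.3}, but to orient every estimate so as to produce a lower bound for a regular solution near infinity. A regular solution satisfies $w(r)>0$ on $[a,\infty)$ and, being non-increasing by~\eqref{1.2}, has a finite limit $w_\infty\ge 0$. If $w_\infty>0$ the estimate~\eqref{T2.4.2} is trivial because its right-hand side tends to zero, so I may assume $w_\infty=0$. Under this assumption every monotone quantity $(-1)^iw^{(i)}$, $i=1,\dots,m-1$, also vanishes at infinity, so $m-i$ repeated integrations from $r$ to $\infty$ yield the Taylor-type identity
\[
|w^{(i)}(r)|=\frac{1}{(m-i-1)!}\int_r^\infty(s-r)^{m-i-1}(-1)^mw^{(m)}(s)\,ds,\qquad i=0,\dots,m-1.
\]

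Next, I would handle the linear perturbation terms in~\eqref{1.3}. Because $(-1)^iw^{(i)}$ is non-negative and non-increasing, a single mean-value integration between $r$ and $\sigma r$ gives the Kiguradze-type bound $|w^{(i)}(r)|\le C_i\,r^{-i}w(r/\sigma^i)$ for $i=1,\dots,m-1$. Inserting these into~\eqref{1.3}, replacing each $b_i(s)$ by its essential supremum over $(s/\sigma,s\sigma)$, and exploiting the identity $q=f\cdot(1+\sum_i s^{m-i}\esssup_{(s/\sigma,s\sigma)} b_i)$, one should absorb the perturbation into the denominator of $f$ and arrive at $(-1)^mw^{(m)}(s)\ge c_0\,f(s)\,h(w(s))$. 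Substituting back into the $i=0$ Taylor identity and bounding $(s-r)^{m-1}\ge c\,s^{m-1}$ on $s\ge\sigma r$ produces the integral inequality
\[
w(r)\ge c_1\int_{\sigma r}^\infty s^{m-1}f(s)\,h(w(s))\,ds.
\]

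The last stage converts this into the $G_\infty$-bound. Set $\Phi(r)=\int_r^\infty s^{m-1}f(s)\,ds$ and discretize by the level sets of $w$: choose $s_j$ with $w(s_j)=v_j:=\theta^{-j}w(r)$, so that on each ring $[s_j,s_{j+1}]$ one has $w(s)\in(v_j/\theta,\theta v_j)$ and hence $h(w(s))\ge g(v_j)$ by the definition of $g$. Running the integral inequality from $s_j$ gives the single-term bound $v_j\ge c\,g(v_j)\bigl(\Phi(s_j)-\Phi(s_{j+1})\bigr)$, i.e.\ $\Phi(s_j)-\Phi(s_{j+1})\le v_j/(c\,g(v_j))$. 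Setting $a_j=g^{-1/m}(v_j)v_j^{1/m}$ and summing, one obtains $\Phi(r)\le c^{-1}\sum_j a_j^m\le c^{-1}\bigl(\sum_j a_j\bigr)^m$ by the elementary multinomial inequality $\sum a_j^m\le(\sum a_j)^m$ valid for non-negative terms and integer $m\ge 1$. A Riemann-sum comparison identifies $\sum_j a_j$ with the defining integral of $G_\infty(w(r))$, giving $\sum_j a_j\le C\,G_\infty(w(r))$, whence $G_\infty(w(r))^m\ge c'\,\Phi(r)$, which is~\eqref{T2.4.2}.

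The delicate point is this final Riemann-sum comparison: the $\sigma$-discretization of $\Phi$ in $r$ and the $\theta$-discretization of the level sets in $w$ are \emph{a priori} unrelated, and hypothesis~\eqref{T2.4.1} is what renders the $\sigma$-discretization of $\Phi$ lossless --- it forces $r^mf(r)$ and $\Phi(r)$ to be comparable and is needed to control the ring increments $\Phi(s_j)-\Phi(s_{j+1})$ appearing above. It plays the same role here that~\eqref{T2.2.2} plays in the proof of Theorem~\ref{T2.3}.
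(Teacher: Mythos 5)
The central step of your argument --- passing from~\eqref{1.3} to the pointwise bound $(-1)^m w^{(m)}(s)\ge c_0\,f(s)\,h(w(s))$ by ``absorbing the perturbation into the denominator of $f$'' --- does not work, and this is a genuine gap rather than a technicality. Inequality~\eqref{1.3} only bounds $(-1)^mw^{(m)}$ from below by $q\,h(w)-\sum_i b_i|w^{(i)}|$, and the subtracted terms need not be dominated by $q\,h(w)$: even with the Kiguradze-type estimates $|w^{(i)}(s)|\le C_i s^{-i}w(s/\sigma^i)$, the quantity $b_i(s)s^{-i}w(s/\sigma^i)$ is unrelated to $h(w(s))$ (take $h(t)=t^\lambda$ with $\lambda$ large, so that $h(w)\ll w$ near a zero of $w$). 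Consequently $(-1)^mw^{(m)}$ may be negative on a set of positive measure, your Taylor identities from $r$ to $\infty$ are not justified, and the integral inequality $w(r)\ge c_1\int_{\sigma r}^\infty s^{m-1}f(s)h(w(s))\,ds$ on which everything downstream rests is not established. The paper avoids a pointwise bound entirely: in Lemma~\ref{L3.1} the interval $[r_1,r_2]$ is decomposed into sets $\Omega_1,\dots,\Omega_m$ according to which term of~\eqref{1.3} dominates, the contribution of the set where $(-1)^mw^{(m)}<0$ is controlled through the integration-by-parts identities~\eqref{PL3.1.1} applied at each order $i$ separately, and the factor $\bigl(1+\sum_i\xi^{m-i}b_i\bigr)^{-1}$ in the definition of $f$ emerges from that integrated argument --- not from a pointwise absorption. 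Note also that Lemma~\ref{L3.1} is only valid on windows with $\sigma r_1\ge r_2$ and $\theta^{1/2}w(r_2)\ge w(r_1)$; integrating over $[r,\infty)$ in one stroke discards both constraints, and the first one is precisely what ties $\esssup_{(r_1,r_2)}b_i$ to the local suprema appearing in $f$.

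Your overall architecture (discretize, bound each ring, use $\sum_ja_j^m\le(\sum_ja_j)^m$, invoke~\eqref{T2.4.1} to make the $\sigma$-steps lossless) is aligned in spirit with the paper's proof, but a second structural piece is missing even granting the integral inequality. Your discretization uses only the level sets $w(s_j)=\theta^{-j}w(r)$; the paper's sequence $\{r_i\}$ is built on a dichotomy --- either $w$ drops by $\theta^{1/4}$ within a ratio $\sigma^{1/2}$ in $r$, or it does not --- and the second regime ($i\in\Xi_2$, where $w$ plateaus) requires a different estimate,~\eqref{PT2.1.2}, involving $\int dt/\eta(t)$ rather than $\int\eta^{-1/m}(t)t^{1/m-1}\,dt$, with Lemma~\ref{L3.4} needed to convert back to $G_\infty$. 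In your scheme this regime shows up as rings where $s_{j+1}\gg\sigma s_j$, and there the single-term bound $v_j\ge c\,g(v_j)\bigl(\Phi(\sigma s_j)-\Phi(s_{j+1})\bigr)$ no longer telescopes to $\Phi(r)$ after you replace $\Phi(\sigma s_j)$ by $c\,\Phi(s_j)$ with $c<1$. The final ``Riemann-sum comparison'' $\sum_ja_j\le C\,G_\infty(w(r))$ also needs the comparison $g(t)\le C\,g(v_j)$ for $t\in(\theta^{-1}v_j,v_j)$, which is exactly why the paper works with the auxiliary function $\eta$ built from $\theta^{1/2}$-windows. In short: the skeleton is recognizable, but the two load-bearing ingredients --- Lemma~\ref{L3.1}'s treatment of the $b_i$ terms and the two-regime splitting resolved by Lemma~\ref{L3.4} --- are absent.
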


Theorems~\ref{T2.3} and \ref{T2.4} are proved in Section~\ref{proofOfTheorems}.

\begin{Example}\label{E2.4}
Consider equation~\eqref{E2.1.1}, where
$b : [a, \infty) \to {\mathbb R}$ 
and
$p : [a, \infty) \to [0, \infty)$
are locally bounded measurable functions such that~\eqref{E2.1.3} and~\eqref{E2.1.4} are valid.

At first, let $s > -1$. If $\lambda > 1$ and $l > s - 1$, then
in accordance with Theorem~\ref{T2.3} any solution of~\eqref{E2.1.1}, \eqref{E2.1.2} 
satisfies the estimate
$$
	w (r)
	\le
	C r^{(l - s + 1) / (1 - \lambda)}
$$
for all sufficiently large $r$, where the constant $C > 0$ does not depend on $w$.
In the case that $\lambda < 1$ and $l < s - 1$, using Theorem~\ref{T2.4}, we obtain
$$
	w (r)
	\ge
	C r^{(l - s + 1) / (1 - \lambda)}
$$
for any regular solution of~\eqref{E2.1.1}, \eqref{E2.1.2}, 
where $r$ runs through a neighborhood of infinity and $C > 0$ is a constant independent of $w$.

Now, assume that $s \le -1$. By Theorem~\ref{T2.3}, if $\lambda > 1$ and $l > -2$, 
then any solution of~\eqref{E2.1.1}, \eqref{E2.1.2} satisfies the estimate
$$
	w (r)
	\le
	C r^{(l + 2) / (1 - \lambda)}
$$
for all sufficiently large $r$, where the constant $C > 0$ does not depends on $w$.
In turn, if $\lambda < 1$ and $l < -2$, then Theorem~\ref{T2.4} implies the inequality
$$
	w (r)
	\ge
	C r^{(l + 2) / (1 - \lambda)}
$$
for any regular solution of~\eqref{E2.1.1}, \eqref{E2.1.2},
where $r$ runs through a neighborhood of infinity and $C > 0$ is a constant independent of $w$.
\end{Example}

\begin{Example}\label{E2.5}
In~\eqref{E2.1.1}, let the locally bounded measurable functions 
$b : [a, \infty) \to {\mathbb R}$ 
and
$p : [a, \infty) \to [0, \infty)$
satisfy relations~\eqref{E2.1.3} and~\eqref{E2.2.1}, where $s > -1$.
If $\lambda > 1$ and $\nu > -1$, then in accordance with Theorem~\ref{E2.3} we have
$$
	w (r)
	\le
	C \log^{(\nu + 1) / (1 - \lambda)} r
$$
for any solution of~\eqref{E2.1.1}, \eqref{E2.1.2},
where $r$ runs through a neighborhood of infinity and $C > 0$ is a constant independent of $w$.
In the case that $\lambda < 1$ and $\nu < -1$, by Theorem~\ref{E2.4},
any regular solution of~\eqref{E2.1.1}, \eqref{E2.1.2} satisfies the estimate
$$
	w (r)
	\ge
	C \log^{(\nu + 1) / (1 - \lambda)} r
$$
for all sufficiently large $r$, where the constant $C > 0$ does not depend on $w$.

Now, assume that $s \le -1$ in~\eqref{E2.1.3} and, moreover, 
relation~\eqref{E2.2.3} is fulfilled instead of~\eqref{E2.2.1}.
By Theorem~\ref{E2.3}, if $\lambda > 1$ and $\gamma > -1$, then
$$
	w (r)
	\le
	C \log^{(\gamma + 1) / (1 - \lambda)} r
$$
for any solution of~\eqref{E2.1.1}, \eqref{E2.1.2},
where $r$ runs through a neighborhood of infinity and $C > 0$ is a constant independent of $w$.
In turn, if $\lambda < 1$ and $\gamma < -1$, then in accordance with Theorem~\ref{E2.4} 
one can claim that
$$
	w (r)
	\ge
	C \log^{(\gamma + 1) / (1 - \lambda)} r
$$
for any regular solution of~\eqref{E2.1.1}, \eqref{E2.1.2},
where $r$ runs through a neighborhood of infinity and $C > 0$ is a constant independent of $w$.
\end{Example}

It does not present any particular problem to show that all estimates 
given in Examples~\ref{E2.4} and~\ref{E2.5} are exact.

\section{Proof of Theorems~\ref{T2.1}--\ref{T2.4}}\label{proofOfTheorems}

Agree on the following notation.
In this section, by $c$ we denote various positive constants 
that can depend only on $m$, $\theta$, and $\sigma$.
For Lemma~\ref{L3.7} and Theorem~\ref{T2.3}, 
these constants can also depend on the value of the limit in the left-hand side of~\eqref{T2.2.2}.
Analogously, in the case of Theorem~\ref{T2.4}, the constants $c$ can depend
on the value of the limit in the left-hand side of~\eqref{T2.4.1}.
We put
$$
	\eta (t)
	=
	\inf_{
		(\theta^{- 1 / 2} t, \theta^{1 / 2} t)
	}
	h,
	\quad
	t \in (0, \infty)
$$
and
$$
	\varphi (\xi)
	=
	\xi^m
	\esssup_{
		(\xi / \sigma, \xi)
		\cap
		[a, \infty)
	}
	f,
	\quad
	\xi \in [a, \infty).
$$

\begin{Lemma}\label{L3.1}
Let $w$ be a solution of~\eqref{1.1}, \eqref{1.2} and, moreover, $a \le r_1 < r_2$ be real numbers
such that $\sigma r_1 \ge r_2$ and $\theta^{1 / 2} w (r_2) \ge w (r_1) > 0$.
Then
$$
	w (r_1)
	-
	w (r_2)
	\ge
	c
	\sup_{
		[w (r_2), w (r_1)]
	}
	\eta
	\int_{
		r_1
	}^{
		r_2
	}
	(\xi - r_1)^{m - 1}
	f (\xi)
	\,
	d \xi.
$$
\end{Lemma}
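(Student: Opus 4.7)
The plan is to apply Taylor's formula to $w$ around $r_2$, substitute the differential inequality \eqref{1.3}, and then absorb the lower-order $b_i|w^{(i)}|$ contributions via a self-improving bound.

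Taylor's formula with integral remainder reads
\[
	w(r_1)=\sum_{k=0}^{m-1}\frac{w^{(k)}(r_2)(r_1-r_2)^k}{k!}+\frac{1}{(m-1)!}\int_{r_2}^{r_1}(r_1-s)^{m-1}w^{(m)}(s)\,ds.
\]
Each $k\ge 1$ Taylor term equals $(-1)^k w^{(k)}(r_2)(r_2-r_1)^k/k!\ge 0$ by \eqref{1.2}, so dropping them and reversing the remainder gives
\[
	w(r_1)-w(r_2)\;\ge\;\frac{1}{(m-1)!}\int_{r_1}^{r_2}(s-r_1)^{m-1}(-1)^m w^{(m)}(s)\,ds.
\]
For any $s\in[r_1,r_2]$ and $t\in[w(r_2),w(r_1)]$, the hypothesis $\theta^{1/2}w(r_2)\ge w(r_1)$ combined with the monotonicity of $w$ forces $w(s)\in(\theta^{-1/2}t,\theta^{1/2}t)$, so $h(w(s))\ge\eta(t)$; taking $\sup$ in $t$ yields $h(w(s))\ge N:=\sup_{[w(r_2),w(r_1)]}\eta$ for every such $s$. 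Substituting this into \eqref{1.3} produces
\[
	(m-1)!\bigl(w(r_1)-w(r_2)\bigr)\;\ge\;N\!\int_{r_1}^{r_2}\!(s-r_1)^{m-1}q(s)\,ds-\sum_{i=1}^{m-1}\!\int_{r_1}^{r_2}\!(s-r_1)^{m-1}b_i(s)|w^{(i)}(s)|\,ds.
\]

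The main obstacle is controlling the error sum. The idea is to reapply the Taylor identity on $[r_1,s]$ for each $s\in(r_1,r_2]$ and use $(-1)^m w^{(m)}\ge-\sum_j b_j|w^{(j)}|$ to get
\[
	\frac{|w^{(i)}(s)|(s-r_1)^i}{i!}\;\le\;\bigl(w(r_1)-w(s)\bigr)+\frac{1}{(m-1)!}\int_{r_1}^{s}(\tau-r_1)^{m-1}\sum_{j=1}^{m-1}b_j(\tau)|w^{(j)}(\tau)|\,d\tau
\]
for $1\le i\le m-1$. Multiplying by $i!\,b_i(s)(s-r_1)^{m-1-i}$, summing in $i$, integrating in $s$, and exploiting the pointwise bound $b_i\le B_i:=\esssup_{(r_1/\sigma,r_1\sigma)\cap[a,\infty)}b_i$ (valid because $[r_1,r_2]\subseteq(r_1/\sigma,r_1\sigma]$) together with $r_2-r_1\le(\sigma-1)r_1$ yields a Gronwall-type inequality for the error sum. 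Absorbing the implicit self-reference (localizing to finer subintervals of $[r_1,r_2]$ if the initial coefficient is not automatically small) upgrades this to
\[
	\sum_{i=1}^{m-1}\int_{r_1}^{r_2}(s-r_1)^{m-1}b_i(s)|w^{(i)}(s)|\,ds\;\le\;c\Bigl(\sum_{i=1}^{m-1}r_1^{m-i}B_i\Bigr)\bigl(w(r_1)-w(r_2)\bigr).
\]

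Combining the previous three displayed inequalities and rearranging gives $(w(r_1)-w(r_2))(1+c\sum_i r_1^{m-i}B_i)\ge cN\int_{r_1}^{r_2}(s-r_1)^{m-1}q(s)\,ds$. The definition $q(s)=f(s)\bigl(1+\sum_i s^{m-i}\tilde B_i(s)\bigr)$, where $\tilde B_i(s)=\esssup_{(s/\sigma,s\sigma)\cap[a,\infty)}b_i$, together with the observation that the weights $1+\sum_i s^{m-i}\tilde B_i(s)$ (for $s\in[r_1,r_2]$) and $1+\sum_i r_1^{m-i}B_i$ are comparable up to constants depending only on $m$ and $\sigma$ (since the intervals $(s/\sigma,s\sigma)$ and $(r_1/\sigma,r_1\sigma)$ overlap in a subinterval of length $\sim r_1$), lets the weights cancel and delivers the claimed bound $w(r_1)-w(r_2)\ge cN\sup_{[w(r_2),w(r_1)]}\eta\cdot\int_{r_1}^{r_2}(\xi-r_1)^{m-1}f(\xi)\,d\xi$. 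The self-improving absorption in the third paragraph is the hardest step.
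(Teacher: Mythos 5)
Your overall route differs from the paper's: the paper partitions $[r_1,r_2]$ into sets $\Omega_1,\dots,\Omega_m$ according to which term of~\eqref{1.3} dominates pointwise, applies the order-$i$ Taylor inequality~\eqref{PL3.1.1} on each $\Omega_i$, and handles the indefinite sign of $w^{(m)}$ by a further split into $\Omega^{\pm}$; you instead keep only the order-$m$ expansion and try to subtract off the error term globally. Two steps of your version have genuine gaps. First, the ``Gronwall-type inequality plus self-improving absorption'' is not actually carried out, and as described it does not close: the absorption coefficient is of size $c\sum_i r_1^{m-i}B_i$, which need not be less than $1$, and the proposed fix of localizing to finer subintervals changes the weight $(s-r_1)^{m-1}$ and the reference point of the Taylor expansion, so the pieces do not recombine in any obvious way. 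Fortunately the target inequality of that paragraph is true and needs none of this: writing the Taylor expansion of order $i$ about $r_2$ and dropping the nonnegative terms (exactly as you did for $i=m$) gives, since $(-1)^iw^{(i)}\ge 0$ for $i\le m-1$,
$$
	\int_{r_1}^{r_2}(s-r_1)^{i-1}\,|w^{(i)}(s)|\,ds\le (i-1)!\,\bigl(w(r_1)-w(r_2)\bigr),
$$
and multiplying the integrand by $b_i(s)(s-r_1)^{m-i}\le c\,r_1^{m-i}\esssup_{(r_1,r_2)}b_i$ (using $r_2-r_1\le(\sigma-1)r_1$) yields your displayed error bound in one line. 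This family of order-$i$ inequalities is precisely the paper's~\eqref{PL3.1.1}, which you never exploit.

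Second, the weight cancellation at the end is wrong as justified. You need $1+\sum_i s^{m-i}\tilde B_i(s)\ge c\bigl(1+\sum_i r_1^{m-i}B_i\bigr)$ for $s\in[r_1,r_2]$, and the fact that $(s/\sigma,s\sigma)$ and $(r_1/\sigma,r_1\sigma)$ overlap does not give comparability of essential suprema: if $b_i$ is large only on $(r_1/\sigma,\,s/\sigma)$, then $B_i=\esssup_{(r_1/\sigma,r_1\sigma)}b_i$ is large while $\tilde B_i(s)=\esssup_{(s/\sigma,s\sigma)}b_i$ can vanish, and the ratio of the two weights is then unbounded. The repair is to run the error estimate with $B_i'=\esssup_{(r_1,r_2)}b_i$ instead of $B_i$ (which the direct bound above delivers anyway); since $r_2\le\sigma r_1$ implies $(r_1,r_2)\subseteq(s/\sigma,s\sigma)$ for every $s\in[r_1,r_2]$, one has the one-sided inequality $1+\sum_i s^{m-i}\tilde B_i(s)\ge 1+\sum_i r_1^{m-i}B_i'$, and hence $f(s)\le q(s)\big/\bigl(1+\sum_i r_1^{m-i}B_i'\bigr)$ on $[r_1,r_2]$, which is exactly what the final division requires. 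With these two repairs your argument becomes a correct, and arguably simpler, alternative to the paper's case analysis on $\Omega^{\pm}$; as written, however, both the absorption step and the comparability claim are unsupported.
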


\begin{proof}
Integrating by parts, we obtain
\begin{align*}
	w (r_1)
	-
	w (r_2)
	=
	&
	\sum_{1 \le k \le i - 1}
	\frac{
		(-1)^k w^{(k)} (r_2)
		(r_2 - r_1)^k
	}{
		k!
	}
	\\
	&
	+
	\frac{1}{(i - 1)!}
	\int_{
		r_1
	}^{
		r_2
	}
	(\xi - r_1)^{i - 1}
	(-1)^i
	w^{(i)} (\xi)
	\,
	d\xi,
	\quad
	i = 1, \ldots, m.
\end{align*}
By~\eqref{1.2}, this implies the inequality
\begin{equation}
	w (r_1)
	-
	w (r_2)
	\ge
	\frac{1}{(i - 1)!}
	\int_{
		r_1
	}^{
		r_2
	}
	(\xi - r_1)^{i - 1}
	(-1)^i
	w^{(i)} (\xi)
	\,
	d \xi,
	\quad
	i = 1, \ldots, m.
	\label{PL3.1.1}
\end{equation}
We denote 
$$
	\Omega_i 
	= 
	\left\{ 
		\xi \in [r_1, r_2]
		:
		(-1)^i w^{(i)} (\xi) 
		b_i (\xi)
		\ge 
		\frac{1}{m}
		q (\xi) h (w)
	\right\},
	\quad
	i = 1, \ldots, m - 1.
$$
In addition, let
$$
	\Omega_m
	= 
	\left\{ 
		\xi \in [r_1, r_2]
		:
		(-1)^m w^{(m)} (\xi) 
		\ge 
		\frac{1}{m}
		q (\xi) h (w)
	\right\}.
$$
From~\eqref{1.1}--\eqref{1.3}, it follows that
$$
	\mes
	{}
	[r_1, r_2]
	\setminus
	\cup_{i=1}^m
	\Omega_i
	=
	0.
$$
For all $i \in \{ 1, \ldots, m\}$ we have
\begin{equation}
	(-1)^i w^{(i)} (\xi) 
	\ge 
	\frac{1}{m}
	r^{m - i}
	f (\xi) h (w)
	\label{PL3.1.5}
\end{equation}
almost everywhere on $\Omega_i$.
Hence, condition~\eqref{1.2} allows us to assert that
$$
	(-1)^i w^{(i)} (\xi) 
	\ge 
	\frac{1}{m}
	\chi_{\Omega_i} (\xi)
	\xi^{m - i}
	f (\xi) h (w)
$$
for almost all $\xi \in [r_1, r_2]$ and for all $i \in \{ 1, \ldots, m - 1 \}$,
where $\chi_{\Omega_i}$ is the characteristic function of the set $\Omega_i$, i.e.
$$
	\chi_{\Omega_i} (\xi)
	=
	\left\{
		\begin{array}{ll}
			1,
			&
			\xi
			\in
			\Omega_i,
			\\
			0,
			&
			\xi
			\not\in
			\Omega_i.
		\end{array}
	\right.
$$
Combining this with~\eqref{PL3.1.1}, we obtain
\begin{equation}
	w (r_1)
	-
	w (r_2)
	\ge
	c
	\int_{\Omega_i}
	(\xi - r_1)^{m - 1}
	f (\xi)
	h (w)
	\,
	d\xi,
	\quad
	i = 1, \ldots, m - 1.
	\label{PL3.1.2}
\end{equation}

Let us also establish the validity of the inequality
\begin{equation}
	w (r_1)
	-
	w (r_2)
	\ge
	c
	\int_{\Omega_m}
	(\xi - r_1)^{m - 1}
	f (\xi)
	h (w)
	\,
	d \xi.
	\label{PL3.1.3}
\end{equation}
Really, taking~\eqref{PL3.1.1} into account, we have
\begin{align}
	w (r_1)
	-
	w (r_2)
	\ge
	{}
	&
	\frac{1}{(m - 1)!}
	\int_{
		\Omega^{+}
	}
	(\xi - r_1)^{m - 1}
	|w^{(m)} (\xi)|
	\,
	d \xi
	\nonumber
	\\
	&
	-
	\frac{1}{(m - 1)!}
	\int_{
		\Omega^{-}
	}
	(\xi - r_1)^{m - 1}
	|w^{(m)} (\xi)|
	\,
	d \xi,
	\label{PL3.1.4}
\end{align}
where
$
	\Omega^{+} 
	= 
	\{ 
		\xi \in [r_1, r_2]
		:
		(-1)^m w^{(m)} (\xi) \ge 0
	\}
$
and $\Omega^{-} = [r_1, r_2] \setminus \Omega^{+}$.
In the case that
$$
	\frac{1}{2}
	\int_{
		\Omega^{+}
	}
	(\xi - r_1)^{m - 1}
	|w^{(m)} (\xi)|
	\,
	d \xi
	\ge
	\int_{
		\Omega^{-}
	}
	(\xi - r_1)^{m - 1}
	|w^{(m)} (\xi)|
	\,
	d \xi,
$$
formula~\eqref{PL3.1.4} implies the estimate
$$
	w (r_1)
	-
	w (r_2)
	\ge
	\frac{1}{2 (m - 1)!}
	\int_{
		\Omega^{+}
	}
	(\xi - r_1)^{m - 1}
	|w^{(m)} (\xi)|
	\,
	d \xi
$$
from which, taking into account~\eqref{PL3.1.5} and the evident inclusion 
$\Omega_m \subset \Omega^{+}$, we immediately obtain~\eqref{PL3.1.3}.

Now, let
\begin{equation}
	\frac{1}{2}
	\int_{
		\Omega^{+}
	}
	(\xi - r_1)^{m - 1}
	|w^{(m)} (\xi)|
	\,
	d \xi
	<
	\int_{
		\Omega^{-}
	}
	(\xi - r_1)^{m - 1}
	|w^{(m)} (\xi)|
	\,
	d \xi.
	\label{PL3.1.6}
\end{equation}
We put
$$
	\omega_i
	=
	\left\{
		\xi \in \Omega_i
		:
		(-1)^i
		w^{(i)} (\xi)
		b_i (\xi)
		\ge
		\frac{
			1
		}{
			m - 1
		}
		|w^{(m)} (\xi)|
	\right\},
	\quad
	i = 1, \ldots, m - 1.
$$

According to~\eqref{1.1}--\eqref{1.3}, for almost all $\xi \in \Omega^{-}$ 
there exists $i \in \{ 1, \ldots, m - 1 \}$ such that
$$
	(-1)^i w^{(i)} (\xi)
	b_i (\xi)
	\ge 
	\frac{
		|w^{(m)} (\xi)|
		+
		q (\xi) h (w)
	}{
		m - 1
	}.
$$
Consequently,
$$
	\mes
	\Omega^{-}
	\setminus
	\cup_{i=1}^{m-1}
	\omega_i
	=
	0.
$$
For all $i \in \{ 1, \ldots, m - 1 \}$ we obviously have
$$
	(-1)^i
	w^{(i)} (\xi)
	b_i (\xi)
	\ge
	\frac{
		\xi^{m - i}
		|w^{(m)} (\xi)|
	}{
		(m - 1)
		(1 + \xi^{m - i} b_i (\xi))
	}
$$
almost everywhere on $\omega_i$. 
Combining the last inequality with~\eqref{PL3.1.1}, one can conclude that
$$
	w (r_1)
	-
	w (r_2)
	\ge
	\frac{
		c
	}{
		1 
		+ 
		r_2^{m - i}
		\esssup_{
			(r_1, r_2)
		}
		b_i
	}
	\int_{\omega_i}
	(\xi - r_1)^{m - 1}
	|w^{(m)} (\xi)|
	\,
	d \xi,
	\quad
	i = 1, \ldots, m - 1.
$$
This implies the estimate
$$
	w (r_1)
	-
	w (r_2)
	\ge
	\frac{
		c
	}{
		1 
		+ 
		\sum_{i=1}^{m-1}
		r_2^{m - i}
		\esssup_{
			(r_1, r_2)
		}
		b_i
	}
	\int_{\Omega^{-}}
	(\xi - r_1)^{m - 1}
	|w^{(m)} (\xi)|
	\,
	d \xi
$$
from which, using~\eqref{PL3.1.6}, we obtain
\begin{align*}
	w (r_1)
	-
	w (r_2)
	&
	\ge
	\frac{
		c
	}{
		1 
		+ 
		\sum_{i=1}^{m-1}
		r_2^{m - i}
		\esssup_{
			(r_1, r_2)
		}
		b_i
	}
	\int_{\Omega^{+}}
	(\xi - r_1)^{m - 1}
	|w^{(m)} (\xi)|
	\,
	d \xi
	\\
	&
	\ge
	\frac{
		c
	}{
		1 
		+ 
		\sum_{i=1}^{m-1}
		r_2^{m - i}
		\esssup_{
			(r_1, r_2)
		}
		b_i
	}
	\int_{\Omega_m}
	(\xi - r_1)^{m - 1}
	q (\xi) 
	h (w)
	\,
	d \xi,
\end{align*}
whence~\eqref{PL3.1.3} follows at once.

Inequalities~\eqref{PL3.1.2} and~\eqref{PL3.1.3} enable us to assert that
$$
	w (r_1)
	-
	w (r_2)
	\ge
	c
	\int_{r_1}^{r_2}
	(\xi - r_1)^{m - 1}
	f (\xi)
	h (w)
	\,
	d \xi.
$$
Since
$$
	\inf_{
		\xi \in (r_1, r_2)
	}
	h (w (\xi))
	\ge
	\sup_{
		[w (r_2), w (r_1)]
	}
	\eta,
$$
this completes the proof.
\end{proof}

\begin{Lemma}\label{L3.2}
Let
$r_1 < r_2$
and
$0 < \alpha < 1$
be some real numbers, then
$$
	\left(
		\int_{
			r_1
		}^{
			r_2
		}
		\psi (\xi)
		\,
		d \xi
	\right)^\alpha
	\ge
	A
	\int_{
		r_1
	}^{
		r_2
	}
	\psi (\xi)
	\varkappa^{
	\alpha - 1
	}
	(\xi)
	\,
	d\xi
$$
for any non-negative function $\psi \in L ([r_1, r_2])$, where
$$
	\varkappa (\xi)
	=
	\int_{r_1}^\xi
	\psi (\zeta)
	\,
	d \zeta
$$
and $A > 0$ is a constant depending only on $\alpha$.
\end{Lemma}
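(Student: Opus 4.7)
The plan is to establish the sharper identity
\[
	\int_{r_1}^{r_2}
	\psi (\xi)
	\varkappa^{\alpha - 1} (\xi)
	\,
	d \xi
	=
	\frac{1}{\alpha}
	\left(
		\int_{r_1}^{r_2}
		\psi (\xi)
		\,
		d \xi
	\right)^{\alpha},
\]
from which the lemma follows at once with the explicit constant $A = \alpha$. The starting observation is that $\varkappa$ is absolutely continuous and non-decreasing on $[r_1, r_2]$, with $\varkappa (r_1) = 0$ and $\varkappa' = \psi$ almost everywhere.

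The main step will be to consider the composition $\Phi (\xi) = \varkappa (\xi)^{\alpha} / \alpha$ and apply the fundamental theorem of calculus to it. The outer function $u \mapsto u^{\alpha} / \alpha$ is absolutely continuous on $[0, \varkappa (r_2)]$ precisely because $\alpha \in (0, 1)$ makes its derivative $u^{\alpha - 1}$ integrable on $(0, \varkappa (r_2)]$. Since $\varkappa$ is itself absolutely continuous and monotone, the composition $\Phi$ will inherit absolute continuity on $[r_1, r_2]$, with $\Phi' (\xi) = \varkappa (\xi)^{\alpha - 1} \psi (\xi)$ almost everywhere on the set where $\varkappa (\xi) > 0$. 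On the complementary set, which is necessarily an initial subinterval $[r_1, \xi_0]$, the function $\psi$ vanishes almost everywhere, so the integrand is unambiguously zero there. The fundamental theorem of calculus applied to $\Phi$ will then give
\[
	\int_{r_1}^{r_2}
	\psi (\xi)
	\varkappa^{\alpha - 1} (\xi)
	\,
	d \xi
	=
	\Phi (r_2) - \Phi (r_1)
	=
	\frac{\varkappa (r_2)^{\alpha}}{\alpha},
\]
which, together with $\varkappa (r_2) = \int_{r_1}^{r_2} \psi \, d \xi$, is the identity claimed.

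The one mild obstacle will be the justification of absolute continuity of the composition $\Phi$ together with the chain-rule formula for $\Phi'$; the hypothesis $\alpha < 1$ is crucial exactly here, through the integrability of $u^{\alpha - 1}$ near the origin. Should one wish to sidestep any appeal to a composition lemma, I would instead regularize by setting $\psi_n = \psi + 1 / n$ and $\varkappa_n (\xi) = \varkappa (\xi) + (\xi - r_1) / n$: then $\varkappa_n$ is strictly increasing and bounded below on every $[r_1 + \delta, r_2]$, so the substitution $u = \varkappa_n (\xi)$ poses no difficulty and produces $\int_{r_1}^{r_2} \psi_n \varkappa_n^{\alpha - 1} \, d \xi = \varkappa_n (r_2)^{\alpha} / \alpha$. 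Passing to the limit $n \to \infty$ by Fatou's lemma, with the convergence $\psi_n \varkappa_n^{\alpha - 1} \to \psi \varkappa^{\alpha - 1}$ valid pointwise (under the convention $0 \cdot \infty = 0$), will then yield $\int_{r_1}^{r_2} \psi \varkappa^{\alpha - 1} \, d \xi \le \varkappa (r_2)^{\alpha} / \alpha$, which is precisely the inequality asserted by the lemma.
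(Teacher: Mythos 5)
The paper does not actually prove Lemma~3.2 in situ --- it defers to Lemma~2.1 of the author's earlier paper \cite{meJMS} --- so a line-by-line comparison is not possible; judged on its own, your argument is correct and in fact sharper than the statement requires, since it yields the identity $\int_{r_1}^{r_2}\psi\,\varkappa^{\alpha-1}\,d\xi=\alpha^{-1}\bigl(\int_{r_1}^{r_2}\psi\,d\xi\bigr)^{\alpha}$ and hence the explicit (and optimal) constant $A=\alpha$. The only delicate point is the one you flag: absolute continuity of the composition $\varkappa^{\alpha}$ and the a.e.\ chain rule when $\varkappa$ vanishes. Your first route does handle this correctly --- $u\mapsto u^{\alpha}/\alpha$ is absolutely continuous on $[0,\varkappa(r_2)]$ because $u^{\alpha-1}$ is integrable near $0$, the composition of an AC function with a \emph{monotone} AC function is AC, and the zero set of $\varkappa$ is an initial interval on which $\psi$ vanishes a.e., so the convention of Remark~3.1 is consistent with $\Phi'\equiv 0$ there --- but it does lean on a composition/chain-rule lemma that a referee might ask you to cite (Serrin--Varberg, or the standard monotone-substitution theorem). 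Your second route, regularizing with $\psi_n=\psi+1/n$ and passing to the limit by Fatou, sidesteps that entirely and delivers exactly the inequality $\int_{r_1}^{r_2}\psi\,\varkappa^{\alpha-1}\,d\xi\le\alpha^{-1}\varkappa(r_2)^{\alpha}$ needed for the lemma (note that on the set where $\varkappa=0$ one has $\psi_n\varkappa_n^{\alpha-1}=n^{-\alpha}(\xi-r_1)^{\alpha-1}\to 0$, matching the convention). I would present the Fatou version as the main proof and mention the identity as a remark; either way the lemma is established.
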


\begin{Remark}\label{R3.1}
If $\varkappa (\xi) = 0$ for some $\xi \in (r_1, r_2)$,
then $\psi = 0$ almost everywhere on the interval $(r_1, \xi)$.
In this case, we assume by definition that
$
    \psi (\xi)
    \varkappa^{
        \alpha - 1
    }
    (\xi)
    =
    0.
$
\end{Remark}

Lemma~\ref{L3.2} is proved in~\cite[Lemma~2.1]{meJMS}.

\begin{Lemma}\label{L3.3}
Let $w$ be a solution of~\eqref{1.1}, \eqref{1.2} and, moreover, $a \le r_1 < r_2$ be real numbers
such that $\sigma r_1 \ge r_2$, $w (r_1) > 0$, and 
$\theta^{1 / 2} w (r_2) \ge w (r_1) \ge \theta^{1 / 4} w (r_2)$.
Then
\begin{equation}
	\int_{
		w (r_2)
	}^{
		w (r_1)
	}
	\eta^{- 1 / m} (t)
	t^{1 / m - 1}
	\,
	dt
	\ge
	c
	\int_{r_1}^{r_2}
	\xi^{m - 1}
	f (\xi)
	\varphi^{1 / m - 1} (\xi)
	\,
	d\xi.
	\label{L3.3.1}
\end{equation}
\end{Lemma}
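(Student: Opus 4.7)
The plan is to bound both sides of~\eqref{L3.3.1} by a common quantity of the form $c\, \eta_0^{-1/m} w(r_1)^{1/m}$, where $\eta_0 := \sup_{[w(r_2), w(r_1)]} \eta$, and to close the loop with an auxiliary estimate on $\int_{r_1}^{r_2} \xi^{m-1} f\, d\xi$.

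For the left-hand side, the hypothesis $\theta^{1/2} w(r_2) \ge w(r_1)$ gives the inclusion $[w(r_2), w(r_1)] \subset (\theta^{-1/2} t, \theta^{1/2} t)$ for every $t \in [w(r_2), w(r_1)]$, so $\eta(t) \le \eta_0$ throughout and hence $\eta^{-1/m}(t) \ge \eta_0^{-1/m}$. Combined with the elementary bound $w(r_1)^{1/m} - w(r_2)^{1/m} \ge c\, w(r_1)^{1/m}$, which follows from $w(r_1) \ge \theta^{1/4} w(r_2)$, this yields the desired lower estimate $c\, \eta_0^{-1/m} w(r_1)^{1/m}$ for the LHS.

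For the right-hand side, I apply Lemma~\ref{L3.2}. Introducing $\varkappa(\xi) := \int_{r_1}^\xi \zeta^{m-1} f(\zeta)\, d\zeta$, the condition $\sigma r_1 \ge r_2 \ge \xi$ gives the inclusion $[r_1, \xi] \subset (\xi/\sigma, \xi)$, so $\varkappa(\xi) \le \xi^{m-1}(\xi - r_1)\esssup_{(r_1, \xi)} f \le \xi^m \esssup_{(\xi/\sigma, \xi)} f = \varphi(\xi)$. Since $1/m - 1 < 0$, this gives $\varphi^{1/m-1} \le \varkappa^{1/m-1}$, and Lemma~\ref{L3.2} with $\psi = \xi^{m-1} f$ and $\alpha = 1/m$ then bounds the RHS above by a constant multiple of $\bigl(\int_{r_1}^{r_2} \xi^{m-1} f\, d\xi\bigr)^{1/m}$.

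The last, and most delicate, step is the auxiliary estimate $\int_{r_1}^{r_2} \xi^{m-1} f\, d\xi \le c\, \eta_0^{-1} w(r_1)$, which closes the argument. Lemma~\ref{L3.1} applied directly to $[r_1, r_2]$ only produces the analogous bound with weight $(\xi - r_1)^{m-1}$ in place of $\xi^{m-1}$, and these differ by a factor $(\xi/(\xi - r_1))^{m-1}$ that is unbounded near $r_1$: this is the main obstacle. My approach would be to first observe that $\int_{r_1}^{r_2} \xi^{m-1} f\, d\xi \le \varphi(r_2)$ (via $\xi \le r_2$ and $\esssup_{(r_1, r_2)} f \le \esssup_{(r_2/\sigma, r_2) \cap [a,\infty)} f$, using $r_1 \ge r_2/\sigma$), reducing the task to bounding $\eta_0\, \varphi(r_2)$ by a constant times $w(r_1)$. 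For this last bound I would use the sharper pointwise form of Lemma~\ref{L3.1} (the estimate $w(r_1) - w(r_2) \ge c \int_{r_1}^{r_2} (\xi - r_1)^{m-1} f(\xi) h(w(\xi))\, d\xi$ derived inside its proof) applied on a sub-interval where the essential supremum defining $\varphi(r_2)$ is approximately attained; the hypothesis $w(r_1) \ge \theta^{1/4} w(r_2)$ re-enters at the end to ensure $w(r_1) - w(r_2) \asymp w(r_1)$.
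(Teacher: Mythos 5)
Your decomposition goes wrong at precisely the step you flag as delicate: the auxiliary estimate $\int_{r_1}^{r_2}\xi^{m-1}f(\xi)\,d\xi\le c\,\eta_0^{-1}w(r_1)$, with $\eta_0=\sup_{[w(r_2),w(r_1)]}\eta$, is false, and no repair along the lines you sketch can work. Lemma~\ref{L3.1} controls only the integral with the degenerate weight $(\xi-r_1)^{m-1}$, and the discrepancy near $\xi=r_1$ is real, not technical. Concretely, take $m=2$, $h\equiv1$, $b_1\equiv0$, $q=M\chi_{(r_1,r_1+\varepsilon)}$, and let $w$ equal $\delta$ for $r\ge r_1+\varepsilon$, equal $\delta+\frac M2(r_1+\varepsilon-r)^2$ on $[r_1,r_1+\varepsilon]$, and be continued linearly with slope $-M\varepsilon$ to the left; this is a Kneser solution of $w''=q(r)$, and choosing $\frac M2\varepsilon^2=(\theta^{1/4}-1)\delta$ makes the hypotheses of the lemma hold for any $r_2\in(r_1+\varepsilon,\sigma r_1]$. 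Here $\int_{r_1}^{r_2}\xi f\,d\xi\asymp Mr_1\varepsilon$ while $\eta_0^{-1}w(r_1)\asymp M\varepsilon^2$, so the required inequality fails as $\varepsilon\to0$ for any constant depending only on $m$, $\theta$, $\sigma$. Your fallback is doubly problematic: the essential supremum defining $\varphi(r_2)$ ranges over $(r_2/\sigma,r_2)$, which contains points to the left of $r_1$ where Lemma~\ref{L3.1} applied on $[r_1,r_2]$ says nothing, and even ignoring that, the same example gives $\eta_0\varphi(r_2)=Mr_2^2\gg w(r_1)$. (Note that the target inequality~\eqref{L3.3.1} itself survives this example because of the damping factor $\varphi^{1/m-1}$, which your route discards too early by passing to $\varkappa^{1/m-1}$ and then to the undamped quantity $\bigl(\int_{r_1}^{r_2}\xi^{m-1}f\,d\xi\bigr)^{1/m}$.)

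The fix is to keep the weight that Lemma~\ref{L3.1} actually produces. Apply Lemma~\ref{L3.2} with $\psi(\xi)=(\xi-r_1)^{m-1}f(\xi)$ and $\varkappa(\xi)=\int_{r_1}^{\xi}(\zeta-r_1)^{m-1}f(\zeta)\,d\zeta$, so that $\bigl(\int_{r_1}^{r_2}\psi\bigr)^{1/m}\le c\,\eta_0^{-1/m}w^{1/m}(r_1)$ follows directly from Lemma~\ref{L3.1}, while Lemma~\ref{L3.2} gives $\bigl(\int_{r_1}^{r_2}\psi\bigr)^{1/m}\ge c\int_{r_1}^{r_2}(\xi-r_1)^{m-1}f(\xi)\varkappa^{1/m-1}(\xi)\,d\xi$. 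The whole point is then the pointwise inequality $(\xi-r_1)^{m-1}\varkappa^{1/m-1}(\xi)\ge\xi^{m-1}\varphi^{1/m-1}(\xi)$: since $\varkappa(\xi)\le(\xi-r_1)^{m}\esssup_{(r_1,\xi)}f\le(\xi-r_1)^{m}\xi^{-m}\varphi(\xi)$ and $1/m-1<0$, the factor $(\xi-r_1)^{m(1/m-1)}=(\xi-r_1)^{1-m}$ released by $\varkappa^{1/m-1}$ exactly cancels the degenerate weight and replaces it by $\xi^{m-1}$. Combined with your (correct) lower bound for the left-hand side of~\eqref{L3.3.1}, this finishes the proof without ever invoking the quantity $\int_{r_1}^{r_2}\xi^{m-1}f\,d\xi$, which is genuinely uncontrollable here.
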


\begin{proof}
From Lemma~\ref{L3.1}, it follows that
$$
	w^{1 / m} (r_1)
	\inf_{
		[w (r_2), w (r_1)]
	}
	\eta^{- 1 / m}
	\ge
	c
	\left(
	\int_{r_1}^{r_2}
	(\xi - r_1)^{m - 1}
	f (\xi)
	\,
	d\xi
	\right)^{1 / m}.
$$
In so doing, by Lemma~\ref{L3.2}, we have
$$
	\left(
	\int_{r_1}^{r_2}
	(\xi - r_1)^{m - 1}
	f (\xi)
	\,
	d\xi
	\right)^{1 / m}
	\ge
	c
	\int_{
		r_1
	}^{
		r_2
	}
	(\xi - r_1)^{m - 1}
	f (\xi)
	\varkappa^{1 / m - 1}
	(\xi)
	\,
	d\xi,
$$
where
$$
	\varkappa (\xi)
	=
	\int_{r_1}^\xi
	(\zeta - r_1)^{m - 1}
	f (\zeta)
	\,
	d \zeta.
$$
Therefore,
\begin{equation}
	w^{1 / m} (r_1)
	\inf_{
		[w (r_2), w (r_1)]
	}
	\eta^{- 1 / m}
	\ge
	c
	\int_{
		r_1
	}^{
		r_2
	}
	(\xi - r_1)^{m - 1}
	f (\xi)
	\varkappa^{1 / m - 1}
	(\xi)
	\,
	d\xi.
	\label{PL3.3.1}
\end{equation}
Let us estimate the right-hand side of the last inequality.
Since
$$
	\frac{
		1
	}{
		(\xi - r_1)^m
	}
	\int_{r_1}^\xi
	(\zeta - r_1)^{m - 1}
	f (\xi)
	\,
	d \zeta
	\le
	\esssup_{
		(r_1, \xi)
	}
	f
	\le
	\xi^{-m}
	\varphi (\xi)
$$
for all $\xi \in (r_1, r_2)$, we obtain
$$
	(\xi - r_1)^{m - 1}
	\varkappa^{1 / m - 1}
	(\xi)
	=
	\left(
		\frac{
			1
		}{
			(\xi - r_1)^m
		}
		\int_{r_1}^\xi
		(\zeta - r_1)^{m - 1}
		f (\xi)
		\,
		d \zeta
	\right)^{1 / m - 1}
	\ge
	\xi^{m - 1}
	\varphi^{1 / m - 1}
$$
for all $\xi \in (r_1, r_2)$.
Hence, one can claim that
$$
	\int_{
		r_1
	}^{
		r_2
	}
	(\xi - r_1)^{m - 1}
	f (\xi)
	\varkappa^{1 / m - 1}
	(\xi)
	\,
	d\xi
	\ge
	\int_{
		r_1
	}^{
		r_2
	}
	\xi^{m - 1}
	f (\xi)
	\varphi^{1 / m - 1}
	\,
	d\xi.
$$
At the same time, for the left-hand side of~\eqref{PL3.3.1} we have
$$
	\int_{
		w (r_2)
	}^{
		w (r_1)
	}
	\eta^{- 1 / m} (t)
	t^{1 / m - 1}
	\,
	dt
	\ge
	c
	\,
	w^{1 / m} (r_1)
	\inf_{
		[w (r_2), w (r_1)]
	}
	\eta^{- 1 / m}.
$$
Thus, inequality~\eqref{PL3.3.1} implies~\eqref{L3.3.1}.

The proof is completed.
\end{proof}

\begin{Lemma}\label{L3.4}
Let $u : (0, \infty) \to (0, \infty)$ be a continuous function and, moreover,
$$
	v (t) 
	= 
	\inf_{
		(t / \lambda, \lambda t)
	}
	u,
	\quad
	t \in (0, \infty),
$$
where $\lambda > 1$ is some real number. Then
$$
	\left(
		\int_{t_1}^{t_2}
		v^{- 1 / m} (t)
		t^{1 / m - 1}
		\,
		dt
	\right)^m
	\ge
	A
	\int_{t_1}^{t_2}
	\frac{
		dt
	}{
		u (t)
	}
$$
for all non-negative real numbers $t_1$ and $t_2$ such that $t_2 \ge \lambda t_1$,
where the constant $A > 0$ depends only on $m$ and $\lambda$.
\end{Lemma}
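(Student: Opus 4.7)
The plan is to reduce the lemma to a piecewise comparison on a geometric partition of $[t_1, t_2]$. Set $\mu = \lambda^{1/2}$ and $\tau_k = t_1 \mu^k$; since $t_2 \ge \lambda t_1 = \tau_2$, one can choose the integer $K \ge 2$ with $\tau_K \le t_2 < \tau_{K+1}$. I would decompose $[t_1, t_2]$ into the short cells $E_k = [\tau_k, \tau_{k+1}]$ for $k = 0, \ldots, K-2$ together with the enlarged tail $\tilde E = [\tau_{K-1}, t_2]$. The key geometric fact is that every cell $[a, b]$ of this partition satisfies $b/a \le \mu^2 = \lambda$, so for every $t \in [a, b]$ one has $t/\lambda \le a$ and $\lambda t \ge b$, i.e.\ $(t/\lambda, \lambda t) \supset [a, b]$. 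By the definition of $v$, this yields the pointwise bound $v(t) \le \ell := \inf_{[a,b]} u$ on the whole cell, while trivially $1/u(t) \le 1/\ell$ there as well.

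From these bounds I get $\int_a^b v^{-1/m}(t)\, t^{1/m - 1}\,dt \ge m\,\ell^{-1/m}(b^{1/m} - a^{1/m})$ and $\int_a^b dt/u(t) \le (b - a)/\ell$. A short calculation shows that the ratio $\bigl((b/a)^{1/m} - 1\bigr)^m / (b/a - 1)$, viewed as a function of $\rho = b/a$ alone, is continuous and strictly positive on $[\mu, \mu^2]$ (the lower bound $\rho \ge \mu$ holds with equality for the short cells and because $t_2 \ge \tau_K$ for the tail), and hence bounded below by a constant $c = c(m, \lambda) > 0$. This produces the piece-by-piece estimate $\bigl(\int_a^b v^{-1/m}(t) t^{1/m - 1}\,dt\bigr)^m \ge c \int_a^b dt/u(t)$, equivalently $I_{\text{piece}} \ge c^{1/m}\,J_{\text{piece}}^{1/m}$.

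Summing over the pieces and applying the elementary inequality $\sum x_i^{1/m} \ge \bigl(\sum x_i\bigr)^{1/m}$, valid for nonnegative $x_i$ and $m \ge 1$, one obtains $\int_{t_1}^{t_2} v^{-1/m}(t)\, t^{1/m - 1}\,dt \ge c^{1/m} \bigl(\int_{t_1}^{t_2} dt/u(t)\bigr)^{1/m}$, and raising to the $m$-th power yields the lemma with $A = c$. The main obstacle I anticipate is the tail: if one naively kept the last short cell $[\tau_K, t_2]$, then $\bigl((b/a)^{1/m} - 1\bigr)^m/(b/a - 1)$ could degenerate as $t_2 \to \tau_K^+$, so absorbing $[\tau_K, t_2]$ into the previous cell to form $\tilde E$ is essential for keeping the constant uniform in the position of $t_2$ relative to the grid. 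The degenerate case $t_1 = 0$ is recovered by passing to the limit $t_1 \to 0^+$.
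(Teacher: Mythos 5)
Your proof is correct. Note that the paper does not prove Lemma~\ref{L3.4} internally --- it only refers to \cite[Lemma~2.3]{meIzv} --- so there is no in-house argument to compare against; your geometric-partition argument is a valid, self-contained substitute for that citation. The two delicate points are exactly the ones you identified and handled: (i) absorbing the last grid cell into its predecessor so that every piece $[a,b]$ of the partition has ratio $\rho = b/a \in [\lambda^{1/2}, \lambda]$, which keeps the piecewise constant $m^m(\rho^{1/m}-1)^m/(\rho-1)$ bounded away from zero (it degenerates like $(\rho-1)^{m-1}$ as $\rho \to 1^{+}$, so the naive last cell would indeed ruin uniformity); and (ii) the admissible endpoint $t_1 = 0$, where the monotone passage to the limit $t_1 \to 0^{+}$ is legitimate on both sides even if $\int_0^{t_2} dt/u = \infty$. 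The pointwise comparison $v(t) \le \inf_{[a,b]} u$ on each piece is also sound: $b/a \le \lambda$ gives $(a,b) \subset (t/\lambda, \lambda t)$ for every $t \in [a,b]$, and the infimum of the continuous positive $u$ over $(a,b)$ coincides with that over $[a,b]$, which is strictly positive, so $v^{-1/m}$ is well defined throughout. Combined with the subadditivity $\bigl(\sum x_i\bigr)^{1/m} \le \sum x_i^{1/m}$, this yields the claimed inequality with a constant depending only on $m$ and $\lambda$.
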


Lemma~\ref{L3.4} is proved in~\cite[Lemma~2.3]{meIzv}.

\begin{proof}[Proof of Theorem~$\ref{T2.1}$]
Assume to the contrary that $w$ is a regular solution of problem~\eqref{1.1}, \eqref{1.2}.
In particular, $w$ is a positive function on the whole interval $[a, \infty)$.
This simple fact follows immediately from~\eqref{1.2}.

Construct a sequence of real numbers $\{ r_i \}_{i=0}^\infty$.
We take $r_0 = a$. Assume further that $r_i$ is already known.
If 
$
	\theta^{1/4} 
	w (\sigma^{1/2} r_i) 
	\ge 
	w(r_i),
$
then we put $r_{i+1} = \sigma^{1/2} r_i$; otherwise we take 
$r_{i+1} \in (r_i, \sigma^{1/2} r_i)$
such that
$
	\theta^{1/4} 
	w (r_{i+1})
	=
	w (r_i).
$

Let $\Xi_1$ be the set of positive integers $i$ satisfying the condition 
$\sigma^{1/2} r_{i-1} > r_i$
and $\Xi_2$ be the set of all other positive integers.
By Lemma~\ref{L3.3}, the inequality
\begin{equation}
	\int_{
		w (r_{i+1})
	}^{
		w (r_{i-1})
	}
	\eta^{- 1 / m} (t)
	t^{1 / m - 1}
	\,
	dt
	\ge
	c
	\int_{
		r_{i-1}
	}^{
		r_{i+1}
	}
	\xi^{m - 1}
	f (\xi)
	\varphi^{1 / m - 1} (\xi)
	\,
	d\xi
	\label{PT2.1.1}
\end{equation}
is fulfilled for all $i \in \Xi_1$. 
In turn, if $i \in \Xi_2$, then
$$
	w (r_{i-1})
	-
	w (r_{i+1})
	\ge
	c
	\sup_{
		[w (r_{i+1}), w (r_{i-1})]
	}
	\eta
	\int_{
		r_{i-1}
	}^{
		r_{i+1}
	}
	(\xi - r_{i-1})^{m - 1}
	f (\xi)
	\,
	d \xi
$$
according to Lemma~\ref{L3.1}. Combining this with the evident inequalities
$$
	\int_{
		w (r_{i+1})
	}^{
		w (r_{i-1})
	}
	\frac{
		dt
	}{
		\eta (t)
	}
	\ge
	(
		w (r_{i-1}) 
		-	
		w (r_{i+1})
	)
	\inf_{
		[w (r_{i+1}), w (r_{i-1})]
	}
	\frac{1}{\eta}
$$
and
$$
	\int_{
		r_{i-1}
	}^{
		r_{i+1}
	}
	(\xi - r_{i-1})^{m - 1}
	f (\xi)
	\,
	d \xi
	\ge
	c
	\int_{
		r_i
	}^{
		r_{i+1}
	}
	\xi^{m - 1}
	f (\xi)
	\,
	d \xi,
$$
we conclude that
\begin{equation}
	\int_{
		w (r_{i+1})
	}^{
		w (r_{i-1})
	}
	\frac{
		dt
	}{
		\eta (t)
	}
	\ge
	c
	\int_{
		r_i
	}^{
		r_{i+1}
	}
	\xi^{m - 1}
	f (\xi)
	\,
	d \xi.
	\label{PT2.1.2}
\end{equation}
By~\eqref{T2.1.2}, at least one of the following two relations is valid:
\begin{equation}
	\sum_{i \in \Xi_1}
	\int_{
		r_i
	}^{
		r_{i+1}
	}
	\xi^{m - 1}
	f (\xi)
	\mu^{1 / m - 1} (\xi)
	\,
	d \xi
	=
	\infty,
	\label{PT2.1.3}
\end{equation}
\begin{equation}
	\sum_{i \in \Xi_2}
	\int_{
		r_i
	}^{
		r_{i+1}
	}
	\xi^{m - 1}
	f (\xi)
	\mu^{1 / m - 1} (\xi)
	\,
	d \xi
	=
	\infty.
	\label{PT2.1.4}
\end{equation}

In the case that~\eqref{PT2.1.3} holds, summing~\eqref{PT2.1.1} over all $i \in \Xi_1$, we obtain
$$
	\int_0^{w (a)}
	\eta^{- 1 / m} (t)
	t^{1 / m - 1}
	\,
	dt
	=
	\infty.
$$
This contradicts condition~\eqref{T2.1.1}.

Assume that~\eqref{PT2.1.4} is fulfilled.
In this case, summing~\eqref{PT2.1.2} over all $i \in \Xi_2$, we have
\begin{equation}
	\int_0^{w (a)}
	\frac{
		dt
	}{
		\eta (t)
	}
	=
	\infty.
	\label{PT2.1.5}
\end{equation}
At the same time, from Lemma~\ref{L3.4}, it follows that
$$
	\left(
		\int_0^1
		g^{- 1 / m} (t)
		t^{1 / m - 1}
		\,
		dt
	\right)^m
	\ge
	c
	\int_0^1
	\frac{
		dt
	}{
		\eta (t)
	}.
$$
The last inequality and~\eqref{PT2.1.5} imply~\eqref{T2.3.1}.
Thus, we again arrive at a contradiction with~\eqref{T2.1.1}.

The proof is completed.
\end{proof}

\begin{Lemma}\label{L3.5}
Let~\eqref{T2.2.2} hold and, moreover, $\lambda > 1$ be some real number. 
Then
$$
	\int_a^{
		\lambda r
	}
	\xi^{m - 1}
	f (\xi)
	\,
	d\xi
	\le
	A
	\int_a^{
	r
	}
	\xi^{m - 1}
	f (\xi)
	\,
	d\xi
$$
for all sufficiently large $r$,
where the constant $A > 0$ depends only on $\lambda$ and on the value of the limit 
in the left-hand side of~\eqref{T2.2.2}.
\end{Lemma}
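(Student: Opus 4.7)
The plan is to recognize condition~\eqref{T2.2.2} as a bound on the logarithmic derivative of the integral, which is exactly what controls how much the integral can grow under a bounded dilation of the variable.

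First I would set $F(r) = \int_a^r \xi^{m - 1} f(\xi) \, d\xi$. Since $f \in L_{\infty, loc}([a, \infty))$, the function $F$ is locally absolutely continuous on $[a, \infty)$, with $F'(\xi) = \xi^{m - 1} f(\xi)$ for a.e.\ $\xi \ge a$. By~\eqref{T2.2.1}, $F(r) \to \infty$ as $r \to \infty$, so $F(r) > 0$ for all $r$ beyond some point $r_0 \ge a$. By~\eqref{T2.2.2}, there exist $M > 0$ and $r_1 \ge r_0$ such that
$$
	r^m f(r)
	\le
	M \, F(r)
	\qquad
	\text{for a.e.\ } r \ge r_1,
$$
i.e.\ $r F'(r) \le M F(r)$ almost everywhere beyond $r_1$.

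Next I would translate this into a differential inequality for $\log F$. On $[r_1, \infty)$ the function $\log F$ is locally absolutely continuous, and $(\log F)'(r) = F'(r)/F(r) \le M/r$ a.e. Integrating from $r$ to $\lambda r$ for any $r \ge r_1$ gives
$$
	\log F(\lambda r) - \log F(r)
	\le
	\int_r^{\lambda r}
	\frac{M}{\xi}
	\, d\xi
	=
	M \log \lambda,
$$
whence $F(\lambda r) \le \lambda^M F(r)$ for all $r \ge r_1$. Taking $A = \lambda^M$ yields the claimed bound, and $A$ depends only on $\lambda$ and on $M$, which can be chosen to depend only on the value of the limsup in~\eqref{T2.2.2}.

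I do not expect any real obstacle here: the only mildly delicate point is ensuring the argument is valid under the minimal regularity of $f$ (only local boundedness), which is handled by working with the absolutely continuous function $F$ and invoking its a.e.\ derivative, and by restricting to $r$ large enough that $F(r)$ is positive and~\eqref{T2.2.2} has already taken effect. Everything else is the standard Gronwall-type estimate for the logarithmic derivative.
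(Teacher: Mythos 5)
Your proof is correct. The paper does not reproduce a proof of this lemma at all --- it defers to \cite[Lemma~2.7]{meJMS} --- but the argument you give is the standard one for such a doubling estimate: writing $F(r)=\int_a^r\xi^{m-1}f(\xi)\,d\xi$, turning \eqref{T2.2.2} into the differential inequality $rF'(r)\le M F(r)$ a.e.\ for large $r$, and integrating the logarithmic derivative to get $F(\lambda r)\le\lambda^M F(r)$, with $A=\lambda^M$ depending only on $\lambda$ and the value of the limit in \eqref{T2.2.2}, exactly as claimed. One tiny remark: you invoke \eqref{T2.2.1} to ensure $F(r)>0$ for large $r$, which is not formally among the hypotheses of the lemma; this is harmless, since in the only case where positivity could fail $f$ vanishes a.e.\ and the asserted inequality is trivial, and otherwise $F$ is nondecreasing and eventually positive without any appeal to \eqref{T2.2.1}.
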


\begin{Lemma}\label{L3.6}
In the hypotheses of Lemma~$\ref{L3.5}$, let~\eqref{T2.4.1} be fulfilled 
instead of~\eqref{T2.2.2}, then
$$
	\int_r^\infty
	\xi^{m - 1}
	f (\xi)
	\,
	d\xi
	\le
	A
	\int_{
		\lambda r
	}^\infty
	\xi^{m - 1}
	f (\xi)
	\,
	d\xi
$$
for all sufficiently large $r$,
where the constant $A > 0$ depends only on $\lambda$ and on the value of the limit 
in the left-hand side of~\eqref{T2.4.1}.
\end{Lemma}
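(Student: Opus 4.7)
The plan is to argue in direct parallel with Lemma~\ref{L3.5}, but with the increasing primitive $G(r) = \int_a^r \xi^{m-1} f(\xi)\,d\xi$ replaced by the decreasing tail $F(r) = \int_r^\infty \xi^{m-1} f(\xi)\,d\xi$. Since $\xi^{m-1} f(\xi)$ is locally integrable and integrable over $[a,\infty)$ by hypothesis, $F$ is absolutely continuous on $[a,\infty)$ with $F'(r) = -r^{m-1} f(r)$ for almost every $r$, and the desired inequality is equivalent to a lower bound on how fast $F$ can decay.

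The first step is to pick any constant $M$ strictly greater than the limsup in \eqref{T2.4.1}, so that for some $R_0 \ge a$ one has
$$
r^m f(r) \le M\,F(r)
$$
for almost every $r \ge R_0$ (the limsup being understood in the essential sense, consistent with the way $f$ enters the paper via essential suprema). If $F$ vanishes at some point $\ge R_0$ the inequality in the lemma holds trivially, so I may assume $F > 0$ on $[R_0,\infty)$. Dividing through by $r\,F(r)$ rewrites the pointwise bound as
$$
\frac{-F'(r)}{F(r)} \le \frac{M}{r}
\quad\text{a.e.\ on } [R_0,\infty),
$$
i.e.\ $(\log F)'(r) \ge -M/r$ a.e.

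The second step is simply to integrate from $r$ to $\lambda r$ for $r \ge R_0$:
$$
\log F(\lambda r) - \log F(r) \ge -M\log\lambda,
$$
which yields $F(r) \le \lambda^M F(\lambda r)$. Setting $A = \lambda^M$ gives the desired estimate, and $A$ depends only on $\lambda$ and on the value of the limsup in \eqref{T2.4.1}, as claimed.

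There is no real obstacle here; the argument is the exact dual of the one for Lemma~\ref{L3.5}, differing only in the sign of $F'$ and the direction of integration. The one point to watch is that \eqref{T2.4.1}, as stated pointwise in $r$, must be interpreted compatibly with the fact that $f$ is only measurable; this is handled automatically by reading it through essential suprema, as is done elsewhere in the paper when defining $\mu$ and $\varphi$.
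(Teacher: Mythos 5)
Your argument is correct. Note that the paper does not actually prove Lemma~\ref{L3.6} in the text---it is cited from~\cite[Lemma~2.8]{meJMS}---so there is nothing internal to compare against; your Gronwall-type argument (bounding the logarithmic derivative of the tail $F(r)=\int_r^\infty\xi^{m-1}f(\xi)\,d\xi$ by $M/r$ and integrating over $[r,\lambda r]$ to get $F(r)\le\lambda^M F(\lambda r)$) is the standard and expected route, exactly dual to Lemma~\ref{L3.5}. One small imprecision: if $F$ vanished at some finite point the lemma's inequality would \emph{not} hold trivially for those $r$ with $F(r)>0$ but $F(\lambda r)=0$; rather, that case is vacuous because condition~\eqref{T2.4.1} presupposes the denominator $F(r)$ is positive for all large $r$, so you may simply assume $F>0$ near infinity without the ``trivially holds'' clause.
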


Lemmas~\ref{L3.5} and~\ref{L3.6} are proved in~\cite[Lemmas~2.7 and~2.8]{meJMS}.

\begin{Lemma}\label{L3.7}
Let $w$ be a regular solution of~\eqref{1.1}, \eqref{1.2}. 
If relations~\eqref{T2.2.1} and~\eqref{T2.2.2} are valid, then
\begin{equation}
	\int_{
		w (r)
	}^{
		w (a)
	}
	g^{- 1 / m} (t)
	t^{1 / m - 1}
	\,
	dt
	\ge
	c
	\left(
		\int_a^r
		\xi^{m - 1}
		f (\xi)
		\,
		d\xi
	\right)^{1 / m}
	\label{L3.7.1}
\end{equation}
for all sufficiently large $r$.
\end{Lemma}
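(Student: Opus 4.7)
The plan is to adapt the sequence-construction argument from the proof of Theorem~\ref{T2.1} and to exploit condition~\eqref{T2.2.2} to replace the factor $\varphi^{1/m-1}$ appearing in Lemma~\ref{L3.3} with one involving $F(r) := \int_a^r \xi^{m-1} f(\xi)\,d\xi$. The first step is to show that under~\eqref{T2.2.2} we have $\varphi(\xi) \le C F(\xi)$ for all sufficiently large $\xi$. Indeed, \eqref{T2.2.2} gives $\zeta^m f(\zeta) \le C_0 F(\zeta)$ for all large $\zeta$, and Lemma~\ref{L3.5} gives $F(\zeta) \le C_1 F(\xi)$ for $\zeta \in (\xi/\sigma, \xi)$; combining these and taking the essential supremum yields the claimed bound on $\varphi$. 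A change of variable $u = F(\xi)$ then produces
$$
\int_{r_1}^{r_2} \xi^{m-1} f(\xi)\,\varphi^{1/m-1}(\xi)\,d\xi
\ge
c \int_{F(r_1)}^{F(r_2)} u^{1/m-1}\,du
=
cm \bigl(F^{1/m}(r_2) - F^{1/m}(r_1)\bigr)
$$
for $r_1$ sufficiently large.

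Next I would build the sequence $\{r_i\}_{i=0}^{\infty}$ exactly as in the proof of Theorem~\ref{T2.1}, with the same partition into $\Xi_1$ and $\Xi_2$, and, for a given large $r$, let $N$ be the largest index with $r_N \le r$. For $i \in \Xi_1$ Lemma~\ref{L3.3} applies to $r_1 = r_{i-1}$, $r_2 = r_i$ (the hypothesis $w(r_{i-1}) \ge \theta^{1/4} w(r_i)$ holds with equality by construction). Combined with the $F$-bound above and with $g \le \eta$ (so $g^{-1/m} \ge \eta^{-1/m}$), it yields
$$
\int_{w(r_i)}^{w(r_{i-1})} g^{-1/m}(t)\,t^{1/m-1}\,dt
\ge
c\bigl(F^{1/m}(r_i) - F^{1/m}(r_{i-1})\bigr).
$$
For $i \in \Xi_2$ Lemma~\ref{L3.3} is not available, but the internal argument of its proof (Lemma~\ref{L3.1} followed by Lemma~\ref{L3.2} and the same $\varphi \le CF$ substitution) still produces a parallel estimate expressed through $(w(r_{i-1}) - w(r_i))^{1/m}$ and $\sup_{[w(r_i), w(r_{i-1})]}\eta$. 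To convert this into a bound on $\int_{w(r_i)}^{w(r_{i-1})} g^{-1/m}\,t^{1/m-1}\,dt$, I would group consecutive $\Xi_2$ indices into maximal blocks and enlarge them until the accumulated ratio $w(r_{i_0-1})/w(r_{i_1})$ exceeds $\theta^{1/2}$. Then Lemma~\ref{L3.4}, applied with $u = \eta$ and $\lambda = \theta^{1/2}$ (the relevant $v$ equals $g$ exactly), converts the resulting $\int dt/\eta$-type estimate into one of the desired form.

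Summing the per-step ($\Xi_1$) and per-block ($\Xi_2$) estimates over $i \le N$ on disjoint sub-intervals of $[w(r_N), w(a)]$, telescoping in $F^{1/m}$ using $F(a) = 0$, and finally passing from $r_N$ to $r$ via Lemma~\ref{L3.5}, produces the claimed inequality $\int_{w(r)}^{w(a)} g^{-1/m}(t)\,t^{1/m-1}\,dt \ge c\,F^{1/m}(r)$ for all sufficiently large $r$.

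The main obstacle is precisely the $\Xi_2$ bookkeeping: over a single $\Xi_2$ step $w$ may change so little that $\int_{w(r_i)}^{w(r_{i-1})} g^{-1/m}\,t^{1/m-1}\,dt$ is dominated by $F^{1/m}(r_i) - F^{1/m}(r_{i-1})$, and Lemma~\ref{L3.4} does not apply because $w(r_{i-1})/w(r_i)$ need not reach $\theta^{1/2}$. Controlling the accumulated $F^{1/m}$-increments across a $\Xi_2$-block by the total decrease of $w$ over that block---using Lemma~\ref{L3.5} to show that these cumulative $F$-increments remain comparable to the one produced by the last $\Xi_1$ step terminating the block---is where the bulk of the delicate argument lies.
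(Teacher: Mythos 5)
Your opening step---deriving $\varphi(\xi)\le c\int_a^\xi\zeta^{m-1}f(\zeta)\,d\zeta$ for all sufficiently large $\xi$ from~\eqref{T2.2.2}---is exactly the paper's first move, and your treatment of the $\Xi_1$ steps (telescoping in $F^{1/m}$, where $F(r)=\int_a^r\xi^{m-1}f(\xi)\,d\xi$) is workable. But the $\Xi_2$ part is a genuine gap, which you flag yourself without closing. The obstacles to your block scheme are real: (i) you first need $\lim_n w(r_n)=0$---otherwise a run of $\Xi_2$ indices may never accumulate the ratio $\theta^{1/2}$ required by Lemma~\ref{L3.4}---and this fact needs a separate argument from~\eqref{T2.2.1} that you do not supply; (ii) a block that must be extended to reach ratio $\theta^{1/2}$ may run past the last index $N$ with $r_N\le r$, so the $w$-interval it controls protrudes below $w(r)$ and its contribution cannot be charged to $\int_{w(r)}^{w(a)}$; (iii) even for a completed block, Lemma~\ref{L3.4} yields a bound of the form $\bigl(\sum\int\xi^{m-1}f\,d\xi\bigr)^{1/m}$ over the block, and comparing this with the sum of increments $F^{1/m}(r_i)-F^{1/m}(r_{i-1})$ accumulated by the same block is precisely the estimate you defer to ``the bulk of the delicate argument.'' As written, the proof is incomplete.

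The paper sidesteps all of this bookkeeping with a single global dichotomy: for each $n$, either the $\Xi_1$ steps or the $\Xi_2$ steps with index at most $n$ carry at least half of $\int_a^{r_{n+1}}\xi^{m-1}f(\xi)\,d\xi$. In the first case one sums the Lemma~\ref{L3.3} estimates and bounds $\varphi^{1/m-1}(\xi)$ from below \emph{uniformly} by $c\bigl(\int_a^{r_{n+1}}\xi^{m-1}f(\xi)\,d\xi\bigr)^{1/m-1}$, so no telescoping is needed; in the second case one sums the Lemma~\ref{L3.1}-type estimates over all of the relevant $\Xi_2$ indices to get $\int_{w(r_{n+1})}^{w(a)}dt/\eta(t)\ge c\int_a^{r_{n+1}}\xi^{m-1}f(\xi)\,d\xi$ and applies Lemma~\ref{L3.4} \emph{once}, to the whole interval $[w(r_{n+1}),w(a)]$, whose endpoint ratio exceeds $\theta^{1/2}$ for large $n$ because $w(r_n)\to 0$. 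That dichotomy is the missing idea; with it, your per-block construction becomes unnecessary.
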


\begin{proof}
Consider the sequence of real numbers $\{ r_i \}_{i=0}^\infty$ and the sets $\Xi_1$ and $\Xi_2$
constructed in the proof of Theorem~\ref{T2.1}. It can be seen that
\begin{equation}
	\lim_{n \to \infty} w (r_n) = 0.
	\label{PL3.7.12}
\end{equation}
In fact, if
$$
	\lim_{n \to \infty} w (r_n) = w (\infty) > 0,
$$
then $i \in \Xi_2$ for all sufficiently large $i$. 
Summing~\eqref{PT2.1.2} over all $i \in \Xi_2$, we obtain
$$
	\int_{
		w (\infty)
	}^{
		w (a)
	}
	\frac{
		dt
	}{
		\eta (t)
	}
	\ge
	c
	\sum_{i \in \Xi_2}
	\int_{
		r_i
	}^{
		r_{i+1}
	}
	\xi^{m - 1}
	f (\xi)
	\,
	d \xi.
$$
From~\eqref{T2.2.1}, it follows that the right-hand side of the last expression is equal to infinity, 
whereas the left-hand side is bounded. This contradiction proves~\eqref{PL3.7.12}.

According to~\eqref{T2.2.2}, there exists $j \ge 1$ such that
\begin{equation}
	\varphi (\xi)
	=
	\xi^m
	\esssup_{
		(\xi / \sigma, \xi)
		\cap
		[a, \infty)
	}
	f
	\le
	c
	\int_a^\xi
	\zeta^{m - 1}
	f (\zeta)
	\,
	d \zeta
	\label{PL3.7.1}
\end{equation}
for all $\xi \ge r_j$. 

We denote 
$
	\Xi_{1, n}
	=
	\{
		1 \le i \le n
		:
		i \in \Xi_1
	\}
$
and
$
	\Xi_{2, n}
	=
	\{
		1 \le i \le n
		:
		i \in \Xi_2
	\},
$
$n = 1, 2, \ldots$.
Also take an integer $l \ge 1$ such that $\theta^{1/2} w (r_{n+1}) \le w (a)$ and
\begin{equation}
	\frac{1}{4}
	\int_a^{r_{n+1}}
	\xi^{m - 1}
	f (\xi)
	\,
	d\xi
	\ge
	\int_a^{r_{j+1}}
	\xi^{m - 1}
	f (\xi)
	\,
	d\xi
	\label{PL3.7.2}
\end{equation}
for all $n \ge l$.
In view of~\eqref{PL3.7.12} and~\eqref{T2.2.1}, such an integer $l$ obviously exists.

At first, let
\begin{equation}
	\sum_{
		i \in \Xi_{1,n}
	}
	\int_{
		r_i
	}^{
		r_{i+1}
	}
	\xi^{m - 1}
	f (\xi)
	\,
	d\xi
	\ge
	\frac{1}{2}
	\int_a^{r_{n+1}}
	\xi^{m - 1}
	f (\xi)
	\,
	d\xi
	\label{PL3.7.3}
\end{equation}
for some $n \ge l$.
Summing~\eqref{PT2.1.1} over all $i \in \Xi_{1,n}$, we obtain
$$
	\int_{
		w (r_{n+1})
	}^{
		w (a)
	}
	\eta^{-1 / m} (t)
	t^{1 / m - 1}
	\,
	dt
	\ge
	c
	\sum_{
		i \in \Xi_{1,n}
	}
	\int_{
		r_i
	}^{
		r_{i+1}
	}
	\xi^{m - 1}
	f (\xi)
	\varphi^{1 / m - 1} (\xi)
	\,
	d\xi.
$$
By~\eqref{PL3.7.1}, this implies the inequality
\begin{align}
	\int_{
		w (r_{n+1})
	}^{
		w (a)
	}
	\eta^{- 1 / m} (t)
	t^{1 / m - 1}
	\,
	dt
	\ge
	{}
	&
	c
	\left(
		\int_a^{r_{n+1}}
		\xi^{m - 1}
		f (\xi)
		\,
		d \xi
	\right)^{1 / m - 1}
	\nonumber
	\\
	&
	{\times{}}
	\sum_{
		i 
		\in 
		\Xi_{1,n}
		\setminus
		\Xi_{1,j}
	}
	\int_{
		r_i
	}^{
		r_{i+1}
	}
	\xi^{m - 1}
	f (\xi)
	\,
	d\xi.
	\label{PL3.7.5}
\end{align}
It is easy to see that
$$
	\sum_{
		i 
		\in 
		\Xi_{1,j}
	}
	\int_{
		r_i
	}^{
		r_{i+1}
	}
	\xi^{m - 1}
	f (\xi)
	\,
	d\xi
	\le
	\int_a^{r_{j+1}}
	\xi^{m - 1}
	f (\xi)
	\,
	d\xi;
$$
therefore, taking into account~\eqref{PL3.7.2} and~\eqref{PL3.7.3}, we have
\begin{align*}
	\sum_{
		i 
		\in 
		\Xi_{1,n}
		\setminus
		\Xi_{1,j}
	}
	\int_{
		r_i
	}^{
		r_{i+1}
	}
	\xi^{m - 1}
	f (\xi)
	\,
	d\xi
	&
	=
	\sum_{
		i 
		\in 
		\Xi_{1,n}
	}
	\int_{
		r_i
	}^{
		r_{i+1}
	}
	\xi^{m - 1}
	f (\xi)
	\,
	d\xi
	-
	\sum_{
		i 
		\in 
		\Xi_{1,j}
	}
	\int_{
		r_i
	}^{
		r_{i+1}
	}
	\xi^{m - 1}
	f (\xi)
	\,
	d\xi
	\\
	&
	\ge
	\frac{1}{4}
	\int_a^{r_{n+1}}
	\xi^{m - 1}
	f (\xi)
	\,
	d\xi.
\end{align*}
The last relation and~\eqref{PL3.7.5} imply that
$$
	\int_{
		w (r_{n+1})
	}^{
		w (a)
	}
	\eta^{- 1 / m} (t)
	t^{1 / m - 1}
	\,
	dt
	\ge
	c
	\left(
		\int_a^{r_{n+1}}
		\xi^{m - 1}
		f (\xi)
		\,
		d \xi
	\right)^{1 / m}.
$$
Since $g (t) \le \eta (t)$ for all $t \in (0, \infty)$, this yields the estimate
\begin{equation}
	\int_{
		w (r_{n+1})
	}^{
		w (a)
	}
	g^{- 1 / m} (t)
	t^{1 / m - 1}
	\,
	dt
	\ge
	c
	\left(
		\int_a^{r_{n+1}}
		\xi^{m - 1}
		f (\xi)
		\,
		d \xi
	\right)^{1 / m}.
	\label{PL3.7.6}
\end{equation}

Now, assume that
$$
	\sum_{
		i \in \Xi_{2,n}
	}
	\int_{
		r_i
	}^{
		r_{i+1}
	}
	\xi^{m - 1}
	f (\xi)
	\,
	d\xi
	\ge
	\frac{1}{2}
	\int_a^{r_{n+1}}
	\xi^{m - 1}
	f (\xi)
	\,
	d\xi
$$
for some $n \ge l$.
Summing~\eqref{PT2.1.2} over all $i \in \Xi_{2,n}$, we obtain
$$
	\int_{
		w (r_{n+1})
	}^{
		w (a)
	}
	\frac{
		dt
	}{
		\eta (t)
	}
	\ge
	c
	\int_{
		a
	}^{
		r_{n + 1}
	}
	\xi^{m - 1}
	f (\xi)
	\,
	d \xi.
$$
Combining this with the inequality
$$
	\left(
		\int_{
			w (r_{n+1})
		}^{
			w (a)
		}
		g^{- 1 / m} (t)
		t^{1 / m - 1}
		\,
		dt
	\right)^m
	\ge
	c
	\int_{
		w (r_{n+1})
	}^{
		w (a)
	}
	\frac{
		dt
	}{
		\eta (t)
	}
$$
which follows from Lemma~\ref{L3.4}, we arrive at~\eqref{PL3.7.6} once more.

For any $r \ge r_{l+1}$ there exists $n \ge l$ such that $r_{n+1} \le r < r_{n+2}$.
From Lemma~\ref{L3.5}, we have
$$
	\int_a^{r_{n+1}}
	\xi^{m - 1}
	f (\xi)
	\,
	d \xi
	\ge
	c
	\int_a^r
	\xi^{m - 1}
	f (\xi)
	\,
	d \xi.
$$
It can also be seen that
$$
	\int_{
		w (r)
	}^{
		w (a)
	}
	g^{- 1 / m} (t)
	t^{1 / m - 1}
	\,
	dt
	\ge
	\int_{
		w (r_{n+1})
	}^{
		w (a)
	}
	g^{- 1 / m} (t)
	t^{1 / m - 1}
	\,
	dt.
$$
Combining the last two estimates with~\eqref{PL3.7.6}, we obtain~\eqref{L3.7.1}. 

The proof is completed.
\end{proof}

\begin{proof}[Proof of Theorem~$\ref{T2.2}$]
If $w$ is a regular solution of~\eqref{1.1}, \eqref{1.2}, 
then in accordance with Lemma~\ref{L3.7} inequality~\eqref{L3.7.1} is fulfilled 
for all $r$ in a neighborhood of infinity. 
Passing in this inequality to the limit as $r \to \infty$, we arrive at the contradiction
with~\eqref{T2.1.1} and~\eqref{T2.2.1}.

The proof is completed.
\end{proof}

\begin{proof}[Proof of Theorem~$\ref{T2.3}$]
If $w$ vanishes in a neighborhood of infinity, then~\eqref{T2.3.2} is obvious.
In turn, if $w$ is a regular solution of~\eqref{1.1}, \eqref{1.2}, 
then condition~\eqref{T2.2.1} and estimate~\eqref{L3.7.1} of Lemma~\ref{L3.7} 
imply the relation
$$
	\lim_{r \to \infty} w (r) \to 0.
$$
Hence,~\eqref{T2.3.1} allows one to claim that
$$
	\int_{
		w (r)
	}^1
	g^{- 1 / m} (t)
	t^{1 / m - 1}
	\,
	dt
	\ge
	\frac{1}{2}
	\int_{
		w (r)
	}^{
		w (a)
	}
	g^{- 1 / m} (t)
	t^{1 / m - 1}
	\,
	dt
$$
for all sufficiently large $r$.
Combining this with~\eqref{L3.7.1}, we have
$$
	\int_{
		w (r)
	}^1
	g^{- 1 / m} (t)
	t^{1 / m - 1}
	\,
	dt
	\ge
	c
	\left(
		\int_a^r
		\xi^{m - 1}
		f (\xi)
		\,
		d\xi
	\right)^{1 / m}
$$
for all sufficiently large $r$.
To complete the proof, it remains to note that the last expression is equivalent to~\eqref{T2.3.2}. \end{proof}

\begin{proof}[Proof of Theorem~$\ref{T2.4}$]
As mentioned above, every regular solution of~\eqref{1.1}, \eqref{1.2} 
is a positive function on the whole interval $[a, \infty)$.
This is obvious in view of~\eqref{1.2}.
By Lemma~\ref{L3.6} and relation~\eqref{T2.4.1},
there exists a real number $\rho \ge a$ such that
\begin{equation}
	\int_\xi^\infty
	\zeta^{m - 1}
	f (\zeta)
	\,
	d \zeta
	\le
	c
	\int_{
		\sigma^{1 / 2} 
		\xi
	}^\infty
	\zeta^{m - 1}
	f (\zeta)
	\,
	d \zeta
	\label{PT2.4.6}
\end{equation}
and
\begin{equation}
	\varphi (\xi)
	=
	\xi^m
	\esssup_{
		(\xi / \sigma, \xi)
		\cap
		[a, \infty)
	}
	f
	\le
	c
	\int_\xi^\infty
	\zeta^{m - 1}
	f (\zeta)
	\,
	d \zeta
	\label{PT2.4.1}
\end{equation}
for all $\xi \ge \rho$.
Let $r \in [\rho, \infty)$.
Consider a sequence of real numbers $\{ r_i \}_{i=0}^\infty$ defined as follows.
We take $r_0 = r$.
Assume that $r_i$ is already known.
If 
$
	\theta^{1/4} 
	w (\sigma^{1/2} r_i) 
	\ge 
	w(r_i),
$
then we put $r_{i+1} = \sigma^{1/2} r_i$; otherwise we take 
$r_{i+1} \in (r_i, \sigma^{1/2} r_i)$
such that
$
	\theta^{1/4} 
	w (r_{i+1})
	=
	w (r_i).
$

As in the proof of Theorem~\ref{T2.1}, by $\Xi_1$ we mean the set of positive integers 
$i$ satisfying the condition 
$\sigma^{1/2} r_{i-1} > r_i$.
Also let $\Xi_2$ be the set of all other positive integers.
Thus, inequality~\eqref{PT2.1.1} is valid for all $i \in \Xi_1$, 
whereas~\eqref{PT2.1.2} holds for all $i \in \Xi_2$.
Summing~\eqref{PT2.1.1} over all $i \in \Xi_1$, we obtain
\begin{equation}
	\int_0^{
		w (r)
	}
	\eta^{-1 / m} (t)
	t^{1 / m - 1}
	\,
	dt
	\ge
	c
	\sum_{
		i \in \Xi_1
	}
	\int_{
		r_i
	}^{
		r_{i+1}
	}
	\xi^{m - 1}
	f (\xi)
	\varphi^{1 / m - 1} (\xi)
	\,
	d\xi.
	\label{PT2.4.2}
\end{equation}
Analogously, from~\eqref{PT2.1.2}, it follows that
\begin{equation}
	\int_0^{
		w (r)
	}
	\frac{
		dt
	}{
		\eta (t)
	}
	\ge
	c
	\sum_{
		i \in \Xi_2
	}
	\int_{
		r_i
	}^{
		r_{i+1}
	}
	\xi^{m - 1}
	f (\xi)
	\,
	d \xi.
	\label{PT2.4.3}
\end{equation}

If
\begin{equation}
	\sum_{
		i \in \Xi_1
	}
	\int_{
		r_i
	}^{
		r_{i+1}
	}
	\xi^{m - 1}
	f (\xi)
	\,
	d\xi
	\ge
	\frac{1}{2}
	\int_{r_1}^\infty
	\xi^{m - 1}
	f (\xi)
	\,
	d\xi,
	\label{PT2.4.4}
\end{equation}
then in accordance with~\eqref{PT2.4.1} and~\eqref{PT2.4.2} we have
\begin{align*}
	\int_0^{
		w (r)
	}
	\eta^{- 1 / m} (t)
	t^{1 / m - 1}
	\,
	dt
	&
	\ge
	c
	\left(
		\int_{r_1}^\infty
		\xi^{m - 1}
		f (\xi)
		\,
		d \xi
	\right)^{1 / m - 1}
	\sum_{
		i \in \Xi_2
	}
	\int_{
		r_i
	}^{
		r_{i+1}
	}
	\xi^{m - 1}
	f (\xi)
	\,
	d \xi
	\\
	&
	\ge
	c
	\left(
		\int_{r_1}^\infty
		\xi^{m - 1}
		f (\xi)
		\,
		d \xi
	\right)^{1 / m}.
\end{align*}
This implies the estimate
\begin{equation}
	\int_0^{
		w (r)
	}
	g^{- 1 / m} (t)
	t^{1 / m - 1}
	\,
	dt
	\ge
	c
	\left(
		\int_{r_1}^\infty
		\xi^{m - 1}
		f (\xi)
		\,
		d \xi
	\right)^{1 / m}.
	\label{PT2.4.5}
\end{equation}
In turn, if~\eqref{PT2.4.4} is not valid, then
$$
	\sum_{
		i \in \Xi_2
	}
	\int_{
		r_i
	}^{
		r_{i+1}
	}
	\xi^{m - 1}
	f (\xi)
	\,
	d\xi
	\ge
	\frac{1}{2}
	\int_{r_1}^\infty
	\xi^{m - 1}
	f (\xi)
	\,
	d\xi.
$$
Therefore,~\eqref{PT2.4.3} allows one to claim that
$$
	\int_0^{
		w (r)
	}
	\frac{
		dt
	}{
		\eta (t)
	}
	\ge
	c
	\int_{r_1}^\infty
	\xi^{m - 1}
	f (\xi)
	\,
	d \xi.
$$
Since
$$
	\left(
		\int_0^{w (r)}
		g^{- 1 / m} (t)
		t^{1 / m - 1}
		\,
		dt
	\right)^m
	\ge
	c
	\int_0^{w (r)}
	\frac{
		dt
	}{
		\eta (t)
	}
$$
according to Lemma~\ref{L3.4}, we again obtain~\eqref{PT2.4.5}.

Finally, by relation~\eqref{PT2.4.6}, estimate~\eqref{PT2.4.5} implies that
$$
	\int_0^{
		w (r)
	}
	g^{- 1 / m} (t)
	t^{1 / m - 1}
	\,
	dt
	\ge
	c
	\left(
		\int_r^\infty
		\xi^{m - 1}
		f (\xi)
		\,
		d \xi
	\right)^{1 / m},
$$
whence~\eqref{T2.4.2} follows immediately.

The proof is completed.
\end{proof}


\begin{thebibliography}{99}
\bibitem{BD}
M.~Bartu\v{s}ek, Z.~Do\v{s}l\`a,
Remark on Kneser problem, Appl. Anal. 56 (1995) 327--333.
\bibitem{BCM}
M.~Bartu\v{s}ek, M.~Cecchi, M.~Marini,
On Kneser solutions of nonlinear third order differential equations,
J. Math. Anal. Appl., 261 (2001) 72--84.
\bibitem{HW}
P.~Hartman, A.~Wintner, 
On the non-increasing solutions of $y'' = f (x, y, y'')$, 
Amer. J. Math. 73:2 (1951)390--404.
\bibitem{Izobov}
N. A. Izobov,
On Kneser solutions,
Diff. Uravneniya, 21:4 (1985) 581--588 (Russian).
\bibitem{Kiguradze}
I. T. Kiguradze,
{\it On monotone solutions of n-th order nonlinear ordinary differential equations},
Izvestiya Mathematics, 33 (1969) 1373--1398 (Russian).
\bibitem{KCbook}
I. T. Kiguradze and T. A. Chanturiya,
Asymptotic properties of solutions of nonautonomous ordinary differential equations,
Kluwer Academic, Dordrecht, 1993.
\bibitem{Kneser}
A. Kneser,
Untersuchung und asymptotische Darstellung der Integrale gewisser Differentialgleichungen 
bei grossen reellen Werthen des Arguments, 
Journ. f\"ur die reine und angew. Math. 116 (1896) 178--212.
\bibitem{meIzv}
A. A. Kon'kov,
On solutions of non-autonomous ordinary differential equations,
Izvestiya Mathematics 65:2 (2001) no. 2 285--327.
\bibitem{meJMS}
A. A. Kon'kov,
On properties of solutions of a class of nonlinear ordinary differential equations,
Journal of Mathematical Sciences, 143:4 (2007) 3303--3321.
\bibitem{Kvinikadze}
G. G. Kvinikadze,
Some remarks on solutions of the Kneser problem,
Diff. Uravneniya 14:10 (1978) 1775--1783.
\end{thebibliography}
\end{document}